\providecommand{\tabularnewline}{\\}
\numberwithin{equation}{section}
\numberwithin{figure}{section}
\numberwithin{table}{section}
\theoremstyle{plain}
\newtheorem{thm}{\protect\theoremname}[section]
\theoremstyle{definition}
\newtheorem{example}[thm]{\protect\examplename}
\theoremstyle{definition}
\newtheorem{defn}[thm]{\protect\definitionname}
\theoremstyle{plain}
\newtheorem{prop}[thm]{\protect\propositionname}
\theoremstyle{remark}
\newtheorem{rem}[thm]{\protect\remarkname}
\theoremstyle{plain}
\newtheorem{cor}[thm]{\protect\corollaryname}
\theoremstyle{plain}
\newtheorem{lem}[thm]{\protect\lemmaname}
\newtheoremstyle{plain}
  {\topsep}   
  {\topsep plus 20pt minus 5pt}   
  {}  
  {0pt}       
  {\bfseries} 
  {.}         
  {5pt plus 1pt minus 1pt} 
  {}          
\setlist{itemsep=0pt,topsep=0pt,parsep=1pt,partopsep=0pt}
\renewcommand*\env@cases[1][1]{%
  \let\@ifnextchar\new@ifnextchar
  \left\lbrace
  \def\arraystretch{#1}%
  \array{@{}l@{\quad}l@{}}%
}
\providecommand{\corollaryname}{Corollary}
\providecommand{\definitionname}{Definition}
\providecommand{\examplename}{Example}
\providecommand{\lemmaname}{Lemma}
\providecommand{\propositionname}{Proposition}
\providecommand{\remarkname}{Remark}
\providecommand{\theoremname}{Theorem}
\begin{document}
\global\long\def\GEN{\text{GEN}}%

\global\long\def\DNG{\text{DNG}}%

\global\long\def\Dzero{\mathcal{D}_{G,0}}%

\global\long\def\calA{\mathcal{A}}%

\global\long\def\Conv{\text{Conv}}%

\global\long\def\Ex{\text{Ex}}%

\global\long\def\Int{\text{Int}}%

\global\long\def\Opt{\text{Opt}}%

\global\long\def\mex{\text{mex}}%

\global\long\def\nim{\text{nim}}%

\global\long\def\pty{\text{pty}}%

\global\long\def\min{\text{min}}%

\global\long\def\nOpt{\text{nOpt}}%

\global\long\def\type{\text{type}}%

\pagebreak{}
\title{Impartial Achievement Games on Convex Geometries\\
}
\author{Stephanie McCoy, N\'andor Sieben}
\curraddr{Northern Arizona University, Department of Mathematics and Statistics,
Flagstaff, AZ 86011-5717, USA}
\thanks{Date: \the\month/\the\day/\the\year}
\email{scw249@nau.edu}
\email{nandor.sieben@nau.edu}
\keywords{impartial game, convex geometry, anti-matroid, convex closure}
\subjclass[2010]{91A46, 52A01, 52B40}
\begin{abstract}
We study a game where two players take turns selecting points of a
convex geometry until the convex closure of the jointly selected points
contains all the points of a given winning set. The winner of the
game is the last player able to move. We develop a structure theory
for these games and use it to determine the nim number for several
classes of convex geometries, including one-dimensional affine geometries,
vertex geometries of trees, and games with a winning set consisting
of extreme points. 
\end{abstract}

\maketitle

\section{Introduction}

A convex geometry is an abstract generalization of the notion of convexity
on a finite set of points in Euclidean space. We study an achievement
game where two players take turns selecting previously unselected
points of a convex geometry until the convex closure of the jointly
selected points contains all the points of a given winning set. The
winner of the game is the last player able move. That is, the winner
is the first player to make the convex hull of the jointly selected
points a superset of the winning set.

This game is a version of a group generating game introduced by Anderson
and Harary \cite{anderson.harary:achievement} and further developed
in \cite{Barnes,BeneshErnstSiebenSymAlt,BeneshErnstSiebenDNG,BeneshErnstSiebenGeneralizedDihedral,ErnstSieben}.
Our game is played on a different kind of mathematical object. It
is also a generalization since we introduce a winning set that can
be different from the base set of the mathematical object. The key
tool for studying these generating games is structure equivalence
introduced in \cite{ErnstSieben}. Structure equivalence is an equivalence
relation on the game positions that is compatible with the option
structure of the positions. Taking the quotient of the game digraph
by structure equivalence provides significant simplifications. We
develop a structure theory for our generalization. This allows us
to determine the nim number of our games for several classes of convex
geometries, including one-dimensional affine geometries, vertex geometries
of trees, and games with a winning set consisting of extreme points. 

The structure of the paper proceeds as follows. In Section~\ref{sec:Preliminaries}
we recall some basic terminology of impartial games and convex geometries.
In Section~\ref{sec:achievementGame} we describe the convex closure
achievement game in detail and provide some general results. In Section~\ref{sec:structureTheory}
we introduce structure equivalence and structure diagrams. In Section~\ref{sec:extremeCharacterizations}
we determine the nim numbers of games in which the goal set is a subset
of the extreme point set of the ground set. In Section~\ref{sec:treeCharacterization}
we determine the nim numbers of games played on vertex geometries
of trees. This allows us to easily determine the nim numbers of games
played on affine geometries in $\mathbb{R}$ in Section~\ref{sec:affineCharacterization}
since these affine geometries are isomorphic to vertex geometries
of paths. We conclude with some further questions in Section~\ref{sec:FurtherQuestions}.

\section{Preliminaries\label{sec:Preliminaries}}

If $f:X\to Y$ and $A\subseteq X$, then we often use the standard
$f(A):=\{f(a)\mid a\in A\}$ notation for the image of $A$. The cardinality
of a set $A$ is denoted by $|A|$, and we write $\pty(A):=|A|\mod2$
for the \emph{parity of a set}.

\subsection{Impartial games}

We recall the basic terminology of impartial combinatorial games.
Our general references for the subject are \cite{albert2007lessons,SiegelBook}.

An \emph{impartial} \emph{game} is a finite set $\mathcal{P}$ of
\emph{positions} accompanied with a starting position and a collection
$\Opt(P)\subseteq\mathcal{P}$ of \emph{options} for each position
$P$. In every move of the game, the current position $P$ becomes
an option of $P$ chosen by the player to move. Every game must finish
in finitely many steps. In particular, no position can be reached
twice. We can think of it as a finite acyclic digraph with the positions
as vertices, where there is an arrow from a position to every option
of that position. Game play is moving a token from one vertex to another
along the arrows. The game ends when the token reaches a sink of the
graph. The last player to move is the winner of the game. 

The \emph{minimum excludant} $\mex(A)$ of a set $A$ of non-negative
integers is the smallest non-negative integer that is not in $A$.
The \emph{nim number} $\nim(P)$ of a position $P$ of a game is defined
recursively as the minimum excludant of the nim numbers of the options
of $P$. That is,
\[
\nim(P):=\mex(\nim(\Opt(P))).
\]
The nim number of the game is the nim number of the starting position.

A position is \emph{terminal} if it has no options. A terminal position
$P$ has nim number $\nim(P)=\mex(\nim(\emptyset))=\mex(\emptyset)=0$.
A position $P$ is losing for the player about to move ($P$-position)
if $\nim(P)=0$ and winning ($N$-position) otherwise. The winning
strategy is to always move to an option with nim number $0$ if available.
This places the opponent into a losing position. The nim number is
a central object of interest for impartial games. In addition to determining
the outcome of the game, it also makes it easy to compute the nim
number of sums of games.

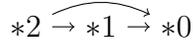
\begin{figure}
\begin{tikzpicture}
\node (p2) at (0,0) {$*2$};
\node (p1) at (1,0) {$*1$};
\node (p0) at (2,0) {$*0$};
\draw[->] (p2) -- (p1);
\draw[->] (p1) -- (p0);
\draw[->] (p2) to[out=25,in=155] (p0);
\end{tikzpicture}

\caption{\label{fig:star2}The acyclic digraph representing the nimber $*2$.}

\end{figure}

\begin{example}
The \emph{nimber} $*n$ is the game with options $\Opt(*n):=\{*0,*1,\ldots,*(n-1)\}$.
The only terminal position of this game is the position $*0$. Induction
shows that $\nim(*n)=n$ since
\[
\begin{aligned}\nim(*n) & =\mex(\nim(\Opt(P)))=\mex(\nim(\{*0,\ldots,*(n-1)\}))\\
 & =\mex(\{\nim(*0),\ldots,\nim(*(n-1))\})=\mex\{0,\ldots,n-1\}=n.
\end{aligned}
\]

The acyclic digraph representation of $*2$ is shown in Figure~\ref{fig:star2}.
In this game the first player can move into position $*0$ and win.
If the first player moves into position $*1$ instead, then the second
player moves into position $*0$ and win.
\end{example}

\subsection{Convex geometries}

We recall some facts about convex geometries from \cite{BjornerZiegler,EJ,KorteLovaszSchrader}. 
\begin{defn}
A \emph{convex geometry} is a pair $(S,\mathcal{K})$, where $S$
is a finite set and $\mathcal{K}$ is a family of subsets of $S$
satisfying the following properties:
\begin{enumerate}
\item $S\in\mathcal{K}$;
\item $K,L\in\mathcal{K}$ implies $K\cap L\in\mathcal{K}$;
\item $S\ne K\in\mathcal{K}$ implies $K\cup\{a\}\in\mathcal{K}$ for some
$a\in S\setminus K$.
\end{enumerate}
\noindent The sets in $\mathcal{K}$ are called \emph{convex}.
\end{defn}

The third condition is called \emph{accessibility}. Following \cite{BjornerZiegler,Dietrich},
we do not require that the empty set and singleton sets are convex.
Note that the family of complements of the convex sets in a convex
geometry forms an \emph{anti-matroid}.

A convex geometry determines a \emph{convex closure} operator $\tau:2^{S}\to2^{S}$
defined by 
\[
\tau(A):=\bigcap\{K\in\mathcal{K}\mid A\subseteq K\}.
\]
A point $a$ of a subset $A$ of a convex geometry is called an \emph{extreme
point} of $A$ if $a\not\in\tau(A\setminus\{a\})$. The set of extreme
points of $A$ is denoted by $\Ex(A)$. 
\begin{prop}
The convex closure operator of a convex geometry satisfies the following
properties:
\begin{enumerate}
\item $A\subseteq\tau(A)$;
\item $A\subseteq B$ implies $\tau(A)\subseteq\tau(B)$;
\item $\tau(\tau(A))=\tau(A)$;
\item $a,b\not\in\tau(A)$ and $a\ne b\in\tau(A\cup\{a\})$ implies $a\not\in\tau(A\cup\{b\})$;
\item $A$ is convex if and only if $\tau(A)=A$;
\item $\tau(A)=\tau(\Ex(A))$.
\end{enumerate}
\end{prop}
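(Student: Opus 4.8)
The plan is to establish the six properties of the convex closure operator $\tau$ essentially in the order listed, since several of the later ones depend on the earlier ones. Properties (1), (2), and (3) are the standard consequences of $\tau$ being defined as an intersection of a family closed under intersection. For (1), every $K\in\mathcal{K}$ containing $A$ contains $A$, so the intersection does too. For (2), if $A\subseteq B$ then every convex set containing $B$ contains $A$, so the intersection defining $\tau(A)$ is over a larger family and hence is contained in $\tau(B)$; here I would note that the intersection is nonempty because $S\in\mathcal{K}$ by axiom~(1) of the definition. For (3), by axiom~(2) the intersection $\tau(A)$ is itself convex (a finite intersection of convex sets, using that $\mathcal{K}$ is closed under pairwise and hence finite intersections), so $\tau(A)\in\mathcal{K}$ and $\tau(A)\subseteq\tau(A)$ forces $\tau(\tau(A))\subseteq\tau(A)$; the reverse inclusion is (1). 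Statement (5) is immediate from this circle of ideas: if $A$ is convex then $A\in\mathcal{K}$ and $A\subseteq A$ gives $\tau(A)\subseteq A$, while (1) gives equality; conversely if $\tau(A)=A$ then $A$ is an intersection of convex sets, hence convex.

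The substantive points are (4) and (6). For the anti-exchange property (4), I would argue by contradiction: suppose $a,b\notin\tau(A)$, that $a\ne b\in\tau(A\cup\{a\})$, but also $a\in\tau(A\cup\{b\})$. From the latter and (2)--(3), $\tau(A\cup\{a\})\subseteq\tau(A\cup\{b\})$, and symmetrically the hypothesis $b\in\tau(A\cup\{a\})$ gives $\tau(A\cup\{b\})\subseteq\tau(A\cup\{a\})$, so $\tau(A\cup\{a\})=\tau(A\cup\{b\})=:C$. I want to contradict accessibility (axiom~(3) of the convex geometry definition). The idea is to look at a maximal convex set $K$ with $A\subseteq K\subseteq C$ and $a,b\notin K$ (such a $K$ exists since $\tau(A)$ works as a starting point: $\tau(A)$ is convex, contains $A$, is contained in $C$, and omits both $a$ and $b$). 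Since $K\subsetneq S$ (it omits $a$), accessibility gives some $c\in S\setminus K$ with $K\cup\{c\}\in\mathcal{K}$. Intersecting $K\cup\{c\}$ with $C$ stays convex and still contains $A\cup\{c\}$ if $c\in C$; one then checks $c$ cannot be $a$ (else $K\cup\{a\}$ convex containing $A$ would give $b\in\tau(A\cup\{a\})\subseteq K\cup\{a\}$, impossible) and cannot be $b$ by the symmetric argument, contradicting maximality of $K$. I expect the careful bookkeeping in this step --- making sure the enlarged convex set still lies inside $C$ and still omits both $a$ and $b$ --- to be the main obstacle, and I may need to phrase it as: among all convex $K$ with $A\subseteq K$, $a\notin K$, $b\notin K$, choose one of maximal cardinality, then derive the contradiction from accessibility applied to $K$ together with the observation that any convex set sandwiched between $A$ and $C$ that contains $a$ must contain $b$ and vice versa.

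For (6), I would show $\tau(\Ex(A))=\tau(A)$ by two inclusions, and the key reduction is to prove $\tau(\Ex(A))\supseteq A$, since then applying $\tau$ and using (2), (3) gives $\tau(\Ex(A))\supseteq\tau(A)$, while $\Ex(A)\subseteq A$ with (2) gives the reverse inclusion. So it suffices to show every $a\in A$ lies in $\tau(\Ex(A))$. Argue by (strong) induction on $|A\setminus\Ex(A)|$, the number of non-extreme points. If this is zero, $A=\Ex(A)$ and there is nothing to prove. Otherwise pick a non-extreme point $a\in A\setminus\Ex(A)$, so $a\in\tau(A\setminus\{a\})$. One checks that removing a non-extreme point does not enlarge the extreme set in a harmful way --- more precisely, $\Ex(A)\subseteq\tau(\Ex(A\setminus\{a\}))$, which would follow if $\Ex(A)\subseteq\Ex(A\setminus\{a\})\cup(\text{something already in }\tau(\Ex(A\setminus\{a\})))$; the cleanest route is to verify directly that $\Ex(A\setminus\{a\}) = \Ex(A)$ when $a$ is non-extreme, or at least that $\tau(\Ex(A\setminus\{a\}))\supseteq \Ex(A)$, using the anti-exchange property (4) to rule out a point of $\Ex(A)$ becoming ``covered'' only because $a$ was deleted. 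Granting that, the induction hypothesis applied to $A\setminus\{a\}$ gives $A\setminus\{a\}\subseteq\tau(\Ex(A\setminus\{a\}))\subseteq\tau(\tau(\Ex(A)))=\tau(\Ex(A))$, and then $a\in\tau(A\setminus\{a\})\subseteq\tau(\tau(\Ex(A)))=\tau(\Ex(A))$ closes the argument. The delicate point here is the claim that deleting a non-extreme point does not create new extreme points, and I would expect to lean on (4) to justify it.
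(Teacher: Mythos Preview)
The paper does not prove this proposition at all: it is stated as a recalled fact from the literature (the subsection opens with ``We recall some facts about convex geometries from \cite{BjornerZiegler,EJ,KorteLovaszSchrader}''), so there is no paper proof to compare against. Your plan is a correct and standard way to derive these properties from the axioms.

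Two small comments on the places where you hedge. For (4), your second phrasing is the right one: take $K$ maximal (by cardinality) among convex sets with $A\subseteq K$ and $a,b\notin K$; you do not need $K\subseteq C$ and you do not need to intersect with $C$. Accessibility yields $c\in S\setminus K$ with $K\cup\{c\}\in\mathcal{K}$; if $c\notin\{a,b\}$ you contradict maximality directly, and if $c=a$ then $K\cup\{a\}\supseteq\tau(A\cup\{a\})\ni b$ forces $b\in K$, a contradiction (symmetrically for $c=b$).

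For (6), the claim you flag as delicate, that $\Ex(A\setminus\{a\})=\Ex(A)$ whenever $a\notin\Ex(A)$, does indeed follow from (4). The inclusion $\Ex(A)\subseteq\Ex(A\setminus\{a\})$ is immediate from monotonicity. For the reverse, suppose $e\in\Ex(A\setminus\{a\})\setminus\Ex(A)$. Then $e\notin\tau(A\setminus\{a,e\})$ but $e\in\tau(A\setminus\{e\})=\tau((A\setminus\{a,e\})\cup\{a\})$. One checks $a\notin\tau(A\setminus\{a,e\})$ as well (otherwise $\tau(A\setminus\{e\})=\tau(A\setminus\{a,e\})$ would contain $e$). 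Now anti-exchange with base set $A\setminus\{a,e\}$ gives $a\notin\tau((A\setminus\{a,e\})\cup\{e\})=\tau(A\setminus\{a\})$, contradicting that $a$ is non-extreme in $A$. With this lemma in hand your induction on $|A\setminus\Ex(A)|$ goes through cleanly.
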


The fourth property is called the \emph{anti-exchange property}. So
$\tau$ is a closure operator satisfying the anti-exchange property.
Note that the convex closure of the empty set might not be the empty
set since we do not require the empty set to be convex.

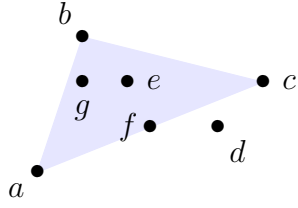
\begin{figure}
\begin{tikzpicture}[scale=.6]
\draw[color=blue!10,fill=blue!10] (0,0) -- (1,3) -- (5,2) -- cycle; 
\node[inner sep=-10,label=200:$a$] (b1) at (0,0) {$\bullet$}; 
\node[inner sep=-10,label=100:$b$] (b3) at (1,3) {$\bullet$}; 
\node[inner sep=0,label=0:$c$] (b4) at (5,2) {$\bullet$}; 
\node[color=white,inner sep=0] at (2,2) {$\bullet$};
\node[inner sep=0,label=0:$e$] (b5) at (2,2) {$\bullet$};
\node[color=white,inner sep=0] at (1,2) {$\bullet$};
\node[inner sep=0,label=270:$g$] (b6) at (1,2) {$\bullet$};
\node[color=white,inner sep=0] (b7) at (2.5,1) {$\bullet$};
\node[inner sep=-10,label=182:$f$] (b7) at (2.5,1) {$\bullet$};
\node[inner sep=-20,label=290:$d$] (b8) at (4,1) {$\bullet$};
\end{tikzpicture}

\caption{\label{fig:hullVertexNonvertex}A point set in $\mathbb{R}^{2}$.
The points in the shaded region belong to $\tau\{a,b,c,e\}$ in the
affine convex geometry.}
\end{figure}

\begin{example}
Let $S$ be a finite subset of $\mathbb{R}^{n}$. The convex subsets
of $S$ are the intersections of $S$ with convex subsets of $\mathbb{R}^{n}$.
The collection $\mathcal{K}$ of convex subsets of $S$ forms a convex
geometry on $S$ called the \emph{affine convex geometry}. The convex
closure operator is $\tau(A)=S\cap\Conv(A)$, where $\Conv(A)$ is
the convex hull of $A$.
\end{example}

\begin{example}
Figure~\ref{fig:hullVertexNonvertex} shows an example of a point
set $S=\{a,b,\ldots,f\}$ in $\mathbb{R}^{2}$. The set of extreme
points of $S$ in the affine convex geometry is $\Ex(S)=\{a,b,c,d\}$.
The set of extreme points of $A=\{a,b,c,e,f,g\}$ is $\Ex(A)=\{a,b,c\}$,
so $A=\tau(\{a,b,c\})$. We also have $A=\tau(\{a,b,c,e\})=\tau(\{a,b,c,f\})$. 
\end{example}

We define deletions by generalizing the definition in \cite{ConvBeta}
and the notion of minor in \cite{affineRep}.
\begin{defn}
Let $(S,\mathcal{K})$ be a convex geometry and $D$ be a subset of
$S$. The \emph{deletion} of $\mathcal{K}$ by $D$ is the collection
\[
\mathcal{K}\setminus D:=\{K\subseteq S\setminus D\mid K\cup D\in\mathcal{K}\}.
\]
\end{defn}

\begin{prop}
If $(S,\mathcal{K})$ is a convex geometry and $D\subseteq S$, then
$(S\setminus D,\mathcal{K}\setminus D)$ is a convex geometry. 
\end{prop}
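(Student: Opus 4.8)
The plan is simply to verify the three defining properties of a convex geometry for the pair $(S\setminus D,\mathcal{K}\setminus D)$, using the corresponding properties of $(S,\mathcal{K})$. Throughout I would exploit that every member of $\mathcal{K}\setminus D$ is by definition a subset of $S\setminus D$, hence disjoint from $D$.

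For property (1), I would note that $(S\setminus D)\cup D=S\in\mathcal{K}$, so $S\setminus D\in\mathcal{K}\setminus D$ directly from the definition. For property (2), given $K,L\in\mathcal{K}\setminus D$ I would use the set identity $(K\cup D)\cap(L\cup D)=(K\cap L)\cup D$, which holds precisely because $K$ and $L$ are disjoint from $D$. Since $K\cup D$ and $L\cup D$ lie in $\mathcal{K}$, their intersection $(K\cap L)\cup D$ lies in $\mathcal{K}$ by closure of $\mathcal{K}$ under intersection; as $K\cap L\subseteq S\setminus D$, this gives $K\cap L\in\mathcal{K}\setminus D$.

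For property (3) (accessibility), suppose $K\in\mathcal{K}\setminus D$ with $K\ne S\setminus D$. Since $K\subseteq S\setminus D$, the inequality $K\ne S\setminus D$ forces $K\cup D\ne S$. Applying accessibility in $\mathcal{K}$ to the convex set $K\cup D\ne S$ yields a point $a\in S\setminus(K\cup D)$ with $(K\cup D)\cup\{a\}\in\mathcal{K}$. The point of the argument is that $a\notin K\cup D$ means simultaneously $a\notin D$ (so $a\in S\setminus D$) and $a\notin K$; hence $K\cup\{a\}\subseteq S\setminus D$, and $(K\cup\{a\})\cup D=(K\cup D)\cup\{a\}\in\mathcal{K}$, so $K\cup\{a\}\in\mathcal{K}\setminus D$ with $a\in(S\setminus D)\setminus K$, as required.

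The verification is entirely routine; the only place demanding any care is the bookkeeping in property (3)—confirming that the witness $a$ produced by accessibility in $\mathcal{K}$ automatically avoids $D$—and the elementary set-algebra identity used in property (2). No deeper structural input about convex geometries is needed.
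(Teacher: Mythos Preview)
Your proof is correct and follows essentially the same approach as the paper's: verify the three axioms directly, using $(K\cap L)\cup D=(K\cup D)\cap(L\cup D)$ for closure under intersection and lifting the accessibility witness from $\mathcal{K}$. (One minor remark: the set identity in property~(2) actually holds for arbitrary $K,L$ by distributivity, not only when $K,L$ are disjoint from $D$; the disjointness is needed only to ensure $K\cap L\subseteq S\setminus D$.)
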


\begin{proof}
We see that $S\setminus D\in\mathcal{K}\setminus D$ since $(S\setminus D)\cup D=S\in\mathcal{K}$.
If $K,L\in\mathcal{K}\setminus D$ then $K\cap L\in\mathcal{K}\setminus D$
since $(K\cap L)\cup D=(K\cup D)\cap(L\cup D)\in\mathcal{K}$. Now
assume $S\setminus D\ne K\in\mathcal{K}\setminus D$. Since $S\ne K\cup D\in\mathcal{K}$,
accessibility implies that $(K\cup D)\cup\{a\}\in\mathcal{K}$ for
some $a\in S\setminus(K\cup D)$. So $a\in(S\setminus D)\setminus K$
and $K\cup\{a\}\in\mathcal{K}\setminus D$. 
\end{proof}
Note that we do not require $D$ to be convex. This is not necessary
since the empty set does not need to be convex.

\begin{figure}
\begin{tabular}{ccccccccc}
 &  & $\circ\bullet\bullet\,\circ$ & ~ & $\circ\bullet\circ\,\bullet$ & ~ & $\circ\bullet\bullet$ & ~ & $\bullet\,\bullet$\tabularnewline
$\mathcal{K}$ &  & $\{\{0,1\}\}$ &  & $\{\{0\},\{0,1\}\}$ &  & $\{\emptyset,\{0\},\{0,1\}\}$ &  & $\{\emptyset,\{0\},\{1\},\{0,1\}\}$\tabularnewline
$D$ &  & $\{-1,2\}$ &  & $\{-1/2,1/2\}$ &  & $\{-1\}$ &  & $\emptyset$\tabularnewline
$\Ex(S)$ &  & $\emptyset$ &  & $\{1\}$ &  & $\{1\}$ &  & $\{0,1\}$\tabularnewline
\end{tabular}

\caption{\label{fig:convexGeom2}The convex geometries up to isomorphism on
a two-element set $S=\{0,1\}$ represented as deletions of affine
convex geometries on a point set $T$ in $\mathbb{R}$. The points
of $S$ are shown as bullets, while the deleted points are shown as
empty circles.}
\end{figure}

\begin{example}
The convex geometries on a two-element set $S=\{0,1\}$ up to isomorphism
are shown in Figure~\ref{fig:convexGeom2}. Each of these convex
geometries can be represented as a deletion of an affine convex geometry
on a subset $T=S\cup D$ of $\mathbb{R}$. The points in $S$ are
shown with bullets. The deleted points in $D$ are shown as empty
circles. The last row of the table shows the extreme points of $S$.
\end{example}

\begin{rem}
\label{rem:repConj}The main result of \cite{affineRep} is that every
convex geometry that contains the empty set can be represented as
a deletion of an affine convex geometry by a convex set. We conjecture
that a version of this representation result holds even for convex
geometries in which the empty set is not closed. In this conjectured
representation, the deleted set does not need to be convex.
\end{rem}

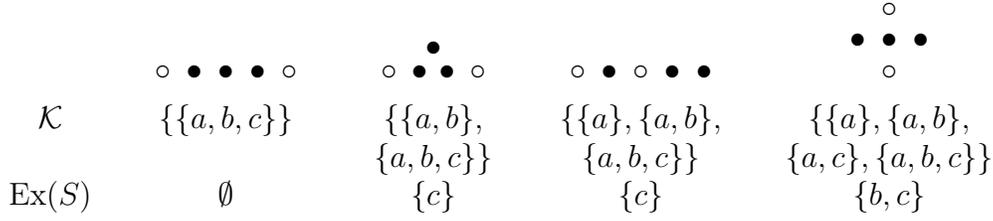
\begin{figure}
\begin{tabular}{ccccccccc}
 &  & \begin{tikzpicture}[scale=.6]
\node at (.7,0) {$\bullet$};
\node at (1.4,0) {$\bullet$};
\node at (2.1,0) {$\bullet$};
\node at (0,0) {$\circ$};
\node at (2.8,0) {$\circ$};
\end{tikzpicture} & ~ & \begin{tikzpicture}[scale=.3,rotate=-30]
\node at (0:.7) {$\bullet$};
\node at (120:.7) {$\bullet$};
\node at (-120:.7) {$\bullet$};
\node at (20:2) {$\circ$};
\node at (-140:2) {$\circ$};
\end{tikzpicture} & ~ & \begin{tikzpicture}[scale=.3,rotate=0]
\node at (0:0) {$\circ$};
\node at (0:-1.4) {$\bullet$};
\node at (0:1.4) {$\bullet$};
\node at (0:2.8) {$\bullet$};
\node at (0:-2.8) {$\circ$};
\end{tikzpicture} & ~ & \begin{tikzpicture}[scale=.3,rotate=90]
\node at (0:0) {$\bullet$};
\node at (0:-1.4) {$\circ$};
\node at (0:1.4) {$\circ$};
\node at (90:1.4) {$\bullet$};
\node at (-90:1.4) {$\bullet$};
\end{tikzpicture}\tabularnewline
$\mathcal{K}$ &  & $\{\{a,b,c\}\}$ &  & $\{\{a,b\},$ &  & $\{\{a\},\{a,b\},$ &  & $\{\{a\},\{a,b\},$\tabularnewline
 &  &  &  & $\{a,b,c\}\}$ &  & $\{a,b,c\}\}$ &  & $\{a,c\},\{a,b,c\}\}$\tabularnewline
$\Ex(S)$ &  & $\emptyset$ &  & $\{c\}$ &  & $\{c\}$ &  & $\{b,c\}$\tabularnewline
\end{tabular}

\caption{\label{fig:convexGeom3}Convex geometries up to isomorphism on a three-element
set $S=\{a,b,c\}$ represented as deletions of affine convex geometries
on a point set $T$ in $\mathbb{R}^{2}$. The points of $S$ are shown
as bullets, while the deleted points are shown as empty circles.}
\end{figure}

\begin{example}
There are four convex geometries up to isomorphism with three points
in which the empty set is not convex. They can be represented as deletions
of affine convex geometries on a point set $T=S\cup D$ of $\mathbb{R}^{2}$,
as shown in Figure~\ref{fig:convexGeom3}. For example, the second
convex geometry in the table can be represented as $S=\{a,b,c\}$
with $a=(1,0)$, $b=(2,0)$, $c=(1.5,1)$ and $D=\{x,y\}$ with $x=(0,0)$,
$y=(3,0)$.
\end{example}

\section{Convex closure achievement game\label{sec:achievementGame}}

We now provide a detailed description of the impartial convex closure
achievement game $\GEN(S,W)$ played on a convex geometry $(S,\mathcal{K})$
with a nonempty winning subset $W$ of $S$. In this game, two players
take turns selecting previously unselected elements of $S$. In turn
$k$ the next player picks $p_{k}$ from $S\setminus\{p_{1},\ldots,p_{k-1}\}$.
The game ends when the convex closure $\tau(P)$ of the jointly selected
points $P=\{p_{1},\ldots,p_{k}\}$ contains $W$. The last player
to make a move wins $\GEN(S,W)$. We call $P$ the current position
of the game. The set of options for a nonterminal position $P$ is
$\Opt(P)=\{P\cup\{s\}\mid s\in S\setminus P\}$.

An important special case is when $W=S$. For this game we use the
notation $\GEN(S)$ instead of the more precise $\GEN(S,S)$.
\begin{example}
\label{exa:3InLine}Consider $\GEN(S)$ on the affine convex geometry
with $S:=\{-1,0,1\}\subseteq\mathbb{R}$ shown in Figure~\ref{fig:TwoSamples}.
If the first player selects $-1$, then the second player can select
$1$ and win the game because $S=\tau\{-1,1\}$. So this first move
is a mistake for the first player.

If the first player selects 0, then without loss of generality we
can assume the second player selects $1$. Then the first player can
win by selecting $-1$. So with best play the first player can win
this game.

Figure~\ref{fig:3InLine} shows the full game digraph of $\GEN(S)$.
The nimbers $*0,*1,*2$ below the sets of chosen points indicate the
nim numbers of the positions.
\end{example}

\begin{figure}
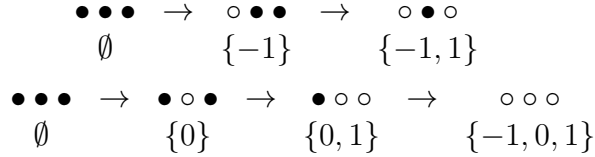

\begin{tabular}{ccccccc}
$\bullet\bullet\bullet$ & $\to$ & $\circ\bullet\bullet$ & $\to$ & $\circ\bullet\circ$ &  & \tabularnewline
$\emptyset$ &  & $\{-1\}$ &  & $\{-1,1\}$ &  & \tabularnewline
\end{tabular}

~

\begin{tabular}{ccccccc}
$\bullet\bullet\bullet$ & $\to$ & $\bullet\circ\bullet$ & $\to$ & $\bullet\circ\circ$ & $\to$ & $\circ\circ\circ$\tabularnewline
$\emptyset$ &  & $\{0\}$ &  & $\{0,1\}$ &  & $\{-1,0,1\}$\tabularnewline
\end{tabular}

\caption{\label{fig:TwoSamples}Two samples of game play for $\protect\GEN(S)$
with $S=\{-1,0,1\}$.}
\end{figure}

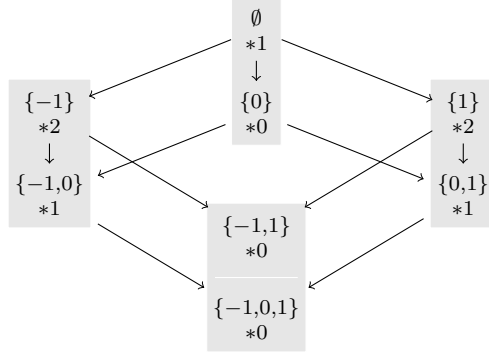
\begin{figure}
\begin{tikzpicture}[scale=1.1]
\draw[draw=white,fill=black!10] (-0.3,0.4) rectangle (.3,-1.4);
\draw[draw=white,fill=black!10] (-0.6,-2.1) rectangle (0.6,-3.9);
\draw[draw=white,fill=black!10] (-3,-0.6) rectangle (-2,-2.4);
\draw[draw=white,fill=black!10] (2.9,-0.6) rectangle (2.1,-2.4);
\draw[draw=white] (-.5,-3) rectangle (.5,-3);
\node (p0) at (0,0) {$\emptyset \atop {*1}$};
\node (p1) at (0,-1) {$\{0\} \atop {*0}$};
\node (p5) at (-2.5,-1) {$\{-1\} \atop {*2}$};
\node (p6) at (2.5,-1) {$\{1\} \atop {*2}$};
\node (p2) at (-2.5,-2) {$\{-1,0\} \atop {*1}$};
\node (p3) at (0,-3.5) {$\{-1,0,1\} \atop {*0}$};
\node (p4) at (2.5,-2) {$\{0,1\} \atop {*1}$};-
\node (p7) at (0,-2.5) {$\{-1,1\} \atop {*0}$};
\draw[->] (p0) -- (p1); 
\draw[->] (p1)-- (p2);
\draw[->] (p2) -- (p3);
\draw[->] (p1) -- (p4);
\draw[->] (p4) -- (p3);
\draw[->] (p0) -- (p5);
\draw[->] (p5) -- (p2);
\draw[->] (p0) -- (p6);
\draw[->] (p6) -- (p4);
\draw[->] (p5) -- (p7);
\draw[->] (p6) -- (p7);
\end{tikzpicture}

\caption{\label{fig:3InLine}Game digraph of $\protect\GEN(S)$ with $S=\{-1,0,1\}$.}
\end{figure}

\begin{prop}
\label{prop:WConvWVerW}The nim numbers of $\GEN(S,\Ex(W))$, $\GEN(S,W)$,
and $\GEN(S,\tau(W))$ are the same.
\end{prop}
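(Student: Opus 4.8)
The plan is to show that the three games $\GEN(S,\Ex(W))$, $\GEN(S,W)$, and $\GEN(S,\tau(W))$ are literally the \emph{same} game: each has position set equal to the subsets of $S$ (reachable from $\emptyset$), the same starting position $\emptyset$, and the same option function $\Opt(P)=\{P\cup\{s\}\mid s\in S\setminus P\}$ for nonterminal $P$. The only point that is not immediate from the definitions is that the three games have the same terminal positions, so that the option function is the same; once that is established, the equality of nim numbers is forced by the recursion $\nim(P)=\mex(\nim(\Opt(P)))$ together with $\nim(P)=0$ for terminal $P$, applied inductively from the terminal positions upward.

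The key step is to prove, for every $P\subseteq S$, the chain of equivalences
\[
 W\subseteq\tau(P)\iff\Ex(W)\subseteq\tau(P)\iff\tau(W)\subseteq\tau(P),
\]
using only the abstract properties of the closure operator recorded above. For the equivalence with $\tau(W)$: if $W\subseteq\tau(P)$, then monotonicity and idempotence give $\tau(W)\subseteq\tau(\tau(P))=\tau(P)$; conversely, if $\tau(W)\subseteq\tau(P)$, then $W\subseteq\tau(W)\subseteq\tau(P)$ by extensivity. For the equivalence with $\Ex(W)$: one direction is $\Ex(W)\subseteq W$; for the other, if $\Ex(W)\subseteq\tau(P)$ then $\tau(W)=\tau(\Ex(W))\subseteq\tau(\tau(P))=\tau(P)$, and we have just seen this is equivalent to $W\subseteq\tau(P)$.

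It then follows that a position $P$ is terminal in $\GEN(S,\Ex(W))$ iff $\Ex(W)\subseteq\tau(P)$ iff $W\subseteq\tau(P)$ iff $P$ is terminal in $\GEN(S,W)$, and likewise for $\GEN(S,\tau(W))$ via the $\tau(W)$ equivalence. Hence the three games have identical terminal sets, identical option rules on nonterminal positions, and therefore identical game digraphs; in particular $\nim(P)$ is the same in all three games for every position $P$, and taking $P=\emptyset$ yields the statement. I do not expect a genuine obstacle here; the only care needed is to record both facts — same terminal positions \emph{and} same options at nonterminal positions — rather than concluding "same game" from coincidence of terminal positions alone.
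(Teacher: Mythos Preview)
Your proposal is correct and follows essentially the same route as the paper: both arguments establish the equivalence $\Ex(W)\subseteq\tau(P)\iff W\subseteq\tau(P)\iff\tau(W)\subseteq\tau(P)$ via the chain $\Ex(W)\subseteq W\subseteq\tau(W)$ together with $\tau(W)=\tau(\Ex(W))$ and idempotence, and conclude that the three games coincide. Your version is simply more explicit about why identical terminal sets plus identical option rules force identical nim numbers, whereas the paper compresses this into the single sentence ``the positions in all three games are exactly the same.''
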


\begin{proof}
Let $P\subseteq S$. Since $\Ex(W)\subseteq W\subseteq\tau(W)$, $\tau(W)\subseteq\tau(P)$
implies $\Ex(W)\subseteq\tau(P)$. On the other hand, $\Ex(W)\subseteq\tau(P)$
implies $\tau(W)=\tau(\Ex(W))\subseteq\tau(\tau(P))=\tau(P)$. Hence
the positions in all three games are exactly the same.
\end{proof}
The following is an immediate consequence.
\begin{cor}
The nim numbers of $\GEN(S)$ and $\GEN(S,\Ex(S))$ are the same.
\end{cor}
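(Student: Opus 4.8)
The corollary asserts that $\GEN(S)$ and $\GEN(S,\Ex(S))$ have the same nim number. The plan is to observe that $\GEN(S)$ is by definition $\GEN(S,S)$, so we must show $\GEN(S,S)$ and $\GEN(S,\Ex(S))$ have the same nim number. This is the special case $W=S$ of Proposition~\ref{prop:WConvWVerW}: that proposition says $\GEN(S,\Ex(W))$, $\GEN(S,W)$, and $\GEN(S,\tau(W))$ all share a nim number, so in particular $\GEN(S,\Ex(W))$ and $\GEN(S,W)$ do.

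Concretely, I would apply Proposition~\ref{prop:WConvWVerW} with the winning set $W=S$. The proposition gives that the nim numbers of $\GEN(S,\Ex(S))$ and $\GEN(S,S)$ coincide. Since $\GEN(S)$ is just shorthand for $\GEN(S,S)$ (as introduced immediately after the definition of the game), this is exactly the claim.

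There is essentially no obstacle here: the corollary is a direct instantiation of the preceding proposition, and the only thing to check is the notational identity $\GEN(S)=\GEN(S,S)$, which is definitional. One could optionally remark that $S$ is automatically a nonempty winning subset of $S$ (assuming $S\ne\emptyset$, which is implicit since the game requires a nonempty winning set), so the hypotheses of the proposition are met. The proof is therefore a single sentence.

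\begin{proof}
Apply Proposition~\ref{prop:WConvWVerW} with $W=S$ and recall that $\GEN(S)=\GEN(S,S)$.
\end{proof}
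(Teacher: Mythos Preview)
Your proposal is correct and matches the paper's approach exactly: the paper states this corollary as ``an immediate consequence'' of Proposition~\ref{prop:WConvWVerW} without giving a separate proof, and your one-line application with $W=S$ is precisely that consequence.
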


\section{\label{sec:structureTheory}Structure theory}

\subsection{Structure equivalence}

Structure equivalence is an equivalence relation on the set of game
positions. This relation is compatible with the option structure and
hence the nim numbers of the positions. This allows us to use a smaller
quotient of the game digraph to compute nim numbers.
\begin{defn}
Consider the achievement game $\GEN(S,W)$ and let $M\subseteq S$.
If $W\subseteq\tau(M)$ then $M$ is called a \emph{generating} \emph{set.
}Otherwise, $M$ is called a \emph{non-generating set.} If $M$ is
non-generating and $N$ is generating for all $M\subset N\subseteq S$,
then $M$ is called \emph{maximally non-generating.} We denote the
set of maximally non-generating subsets by $\mathcal{M}$.
\end{defn}

\begin{example}
Consider the affine convex geometry on $S=\{0,1,2,3\}\subseteq\mathbb{R}$
and let $W=\{1,2\}$. Then $\mathcal{M}=\{\{0,1\},\{2,3\}\}$. 
\end{example}

\begin{prop}
A maximally non-generating set $M$ is convex.
\end{prop}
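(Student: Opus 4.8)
The plan is to argue by contrapositive: I will show that if $M$ is non-generating and $M$ is \emph{not} convex, then $M$ is not maximally non-generating, i.e.\ there is a proper superset of $M$ that is still non-generating. Suppose $M\subseteq S$ is non-generating, so $W\not\subseteq\tau(M)$. If $M$ fails to be convex, then by property (5) of the convex closure operator we have $\tau(M)\neq M$, so there exists a point $a\in\tau(M)\setminus M$. Consider $N:=M\cup\{a\}$, which is a proper superset of $M$ since $a\notin M$.

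The key step is then to check that $N$ is still non-generating. By property (2), $\tau(M)\subseteq\tau(N)$, but also $N=M\cup\{a\}\subseteq\tau(M)$ since $a\in\tau(M)$ and $M\subseteq\tau(M)$ by property (1); applying $\tau$ and using properties (2) and (3) gives $\tau(N)\subseteq\tau(\tau(M))=\tau(M)$. Hence $\tau(N)=\tau(M)$, so $W\not\subseteq\tau(N)$ and $N$ is non-generating. Since $M\subset N\subseteq S$ with $N$ non-generating, $M$ is not maximally non-generating. This establishes the contrapositive, and therefore every maximally non-generating set is convex.

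I do not anticipate a genuine obstacle here; the only thing to be careful about is that the definition of convex geometry in this paper does not require singletons or the empty set to be convex, so one must rely solely on the closure-operator characterization $\tau(M)=M$ for convexity (property (5)) rather than on any structural assumption about $\mathcal{K}$. The argument above uses only properties (1), (2), (3), and (5), all of which hold in full generality, so the proof goes through cleanly. One could alternatively phrase the same idea directly: given maximally non-generating $M$, maximality forces $M\cup\{a\}$ to be generating for every $a\in S\setminus M$; if $\tau(M)\supsetneq M$ picked an $a\in\tau(M)\setminus M$, then $\tau(M\cup\{a\})=\tau(M)$ would make $M\cup\{a\}$ non-generating, a contradiction, so $\tau(M)=M$ and $M$ is convex.
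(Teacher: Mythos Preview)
Your proof is correct and follows essentially the same approach as the paper: the paper observes directly that $\tau(M)$ is a non-generating superset of $M$ (since $W\not\subseteq\tau(M)=\tau(\tau(M))$), so maximality forces $M=\tau(M)$. Your contrapositive version uses $M\cup\{a\}$ for some $a\in\tau(M)\setminus M$ in place of $\tau(M)$ itself, but this is the same idea, and indeed your final paragraph restates the paper's argument almost verbatim.
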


\begin{proof}
The closure $\tau(M)$ of $M$ is a non-generating superset of $M$
since $W\not\subseteq\tau(M)=\tau(\tau(M))$ and $M\subseteq\tau(M)$.
Hence $M=\tau(M)$ since $M$ is maximally non-generating. 
\end{proof}
\begin{defn}
We let 
\[
\mathcal{I}:=\{\bigcap\mathcal{N}\mid\mathcal{N}\subseteq\mathcal{M}\}
\]
be the set of \emph{intersection subsets}. The smallest intersection
subset is the \emph{Frattini subset} $\Phi$, which is the intersection
of all maximally non-generating subsets. 
\end{defn}

Note that with $\mathcal{N}=\emptyset\subseteq\mathcal{M}$ we get
$S=\bigcap\emptyset=\bigcap\mathcal{N}\in\mathcal{I}$. So $S$ is
always an intersection subset.
\begin{example}
Consider the convex geometry $\mathcal{K}=\{\{0\},\{0,1\}\}$ on $S=\{0,1\}$.
If $W=\{0\}$ then $\mathcal{M}=\emptyset$, $\mathcal{I}=\{S\}$,
$\Phi=S$, and $\nim(\GEN(S,W))=0$. Note that in this game there
is only one game position $\emptyset$, and this position is both
a starting and a terminal position. If $W=\{1\}$ then $\mathcal{M}=\{\{0\}\}$,
$\mathcal{I}=\{\{0\},S\}$, and $\Phi=\{0\}$.
\end{example}

\begin{example}
Consider the convex geometry $\mathcal{K}=\{\emptyset,\{0\}\}$ on
$S=\{0\}$. If $W=\{0\}$ then $\mathcal{M}=\{\emptyset\}$, $\mathcal{I}=\{\emptyset,S\}$,
and $\Phi=\emptyset$.
\end{example}

\begin{defn}
For a position $P$ of $\GEN(S,W)$ let
\[
\mathcal{M}_{P}:=\{M\in\mathcal{M}\mid P\subseteq M\},\qquad\lceil P\rceil:=\bigcap\mathcal{M}_{P}.
\]
Two game positions $P$ and $Q$ are \emph{structure equivalent} if
$\lceil P$$\rceil=\lceil Q\rceil$. The \emph{structure class} $X_{I}$
of $I\in\mathcal{I}$ is the equivalence class of $I$ under this
equivalence relation.
\end{defn}

Note that $\lceil I\rceil=I$ for all $I\in\mathcal{I}$ and that
$I\mapsto X_{I}$ is a bijection from $\mathcal{I}$ to the set of
structure classes. 
\begin{example}
\label{exa:3InLineB}Let $W=S=\{-1,0,1\}$ as in Example~\ref{exa:3InLine}.
Then
\[
\begin{aligned}\mathcal{M} & =\{\{-1,0\},\{0,1\}\},\\
\mathcal{I} & =\{\{0\},\{-1,0\},\{0,1\},S\},
\end{aligned}
\]
and $\Phi=\{0\}$. We also have 
\[
\begin{aligned}X_{\{0\}} & =\{\emptyset,\{0\}\}, & X_{\{-1,0\}} & =\{\{-1\},\{-1,0\}\}\\
X_{\{0,1\}} & =\{\{1\},\{0,1\}\}, & X_{S} & =\{\{-1,1\},\{-1,0,1\}\},
\end{aligned}
\]
indicated with the gray boxes in Figure~\ref{fig:3InLine}. So $\lceil\{-1\}\rceil=\{-1,0\}$
and $\lceil\emptyset\rceil=\{0\}$ . Notice the lack of arrow from
$\{-1,1\}$ to $\{-1,0,1\}$.

\end{example}

\begin{prop}
If $\lceil P\rceil=\lceil Q\rceil$ then $\mathcal{M}_{P}=\mathcal{M}_{Q}$.
\end{prop}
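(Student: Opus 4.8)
The plan is to show that $\mathcal{M}_{P}$ is completely determined by $\lceil P\rceil$ through the clean description
\[
\mathcal{M}_{P}=\{M\in\mathcal{M}\mid\lceil P\rceil\subseteq M\},
\]
after which the statement is immediate: the right-hand side depends on $P$ only through $\lceil P\rceil$, so $\lceil P\rceil=\lceil Q\rceil$ forces $\mathcal{M}_{P}=\mathcal{M}_{Q}$.

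First I would record the auxiliary fact that $P\subseteq\lceil P\rceil$ for every position $P$. If $\mathcal{M}_{P}\neq\emptyset$ this is clear, since $\lceil P\rceil=\bigcap\mathcal{M}_{P}$ is an intersection of sets each containing $P$; if $\mathcal{M}_{P}=\emptyset$, then by the empty-intersection convention $\lceil P\rceil=\bigcap\emptyset=S\supseteq P$. This edge case is the only place any care is needed.

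Next I establish the displayed equality. For $\subseteq$: if $M\in\mathcal{M}_{P}$, then $M$ is one of the sets intersected to form $\lceil P\rceil$, so $\lceil P\rceil\subseteq M$. For $\supseteq$: if $M\in\mathcal{M}$ with $\lceil P\rceil\subseteq M$, then combining with $P\subseteq\lceil P\rceil$ from the previous paragraph gives $P\subseteq M$, hence $M\in\mathcal{M}_{P}$. (In the degenerate situation $\mathcal{M}_{P}=\emptyset$, we have $\lceil P\rceil=S$, and $\lceil P\rceil\subseteq M$ would force $M=S$; but $S$ is generating, since $W\subseteq S=\tau(S)$, so $S\notin\mathcal{M}$ and the right-hand side is empty as well, consistent with the claim.)

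Finally, applying the displayed equality to both $P$ and $Q$ and using $\lceil P\rceil=\lceil Q\rceil$ yields
\[
\mathcal{M}_{P}=\{M\in\mathcal{M}\mid\lceil P\rceil\subseteq M\}=\{M\in\mathcal{M}\mid\lceil Q\rceil\subseteq M\}=\mathcal{M}_{Q}.
\]
There is no genuine obstacle here: the argument is a short manipulation of intersections, with the only subtlety being the behavior of $\lceil\cdot\rceil$ on positions contained in no maximally non-generating set.
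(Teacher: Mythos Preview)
Your proof is correct and rests on the same two set-theoretic facts the paper uses: $P\subseteq\lceil P\rceil$ and $\lceil P\rceil\subseteq M$ for every $M\in\mathcal{M}_P$. The paper packages these into a short contradiction argument (pick $M\in\mathcal{M}_P\setminus\mathcal{M}_Q$, find $q\in Q\setminus M$, then note $q\in Q\subseteq\lceil Q\rceil=\lceil P\rceil\subseteq M$), whereas you go a step further and extract the explicit characterization $\mathcal{M}_P=\{M\in\mathcal{M}\mid\lceil P\rceil\subseteq M\}$; this is a slightly more informative formulation, and your attention to the edge case $\mathcal{M}_P=\emptyset$ is a nice touch, but the underlying argument is essentially the same.
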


\begin{proof}
For a contradiction, assume $\mathcal{M}_{P}\neq\mathcal{M}_{Q}$.
Then without loss of generality, there exists an $M\in\mathcal{M}_{P}\setminus\mathcal{M}_{Q}$.
That is, $M\in\mathcal{M}$ such that $P\subseteq M$ but $Q\not\subseteq M$.
Then there exists $q\in Q\setminus M$. This is impossible since $q\in Q\subseteq\lceil Q\rceil=\lceil P\rceil=\bigcap\mathcal{M}_{P}\subseteq M$.
\end{proof}
\begin{prop}
\label{prop:optStruct}Let $P,Q\in X_{I}\neq X_{J}$. If $\Opt(P)\cap X_{J}\neq\emptyset$
then $\Opt(Q)\cap X_{J}\neq\emptyset$.
\end{prop}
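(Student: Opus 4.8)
The plan is to prove the slightly sharper statement that the \emph{same} new element works: if $P\cup\{s\}\in\Opt(P)\cap X_J$ for some $s\in S\setminus P$, then $Q\cup\{s\}$ is a legal option of $Q$ and lies in $X_J$. The only ingredients are the definition $\lceil R\rceil=\bigcap\mathcal{M}_R$, the preceding proposition (which turns $\lceil P\rceil=\lceil Q\rceil$ into $\mathcal{M}_P=\mathcal{M}_Q$), and the bookkeeping identity $\mathcal{M}_{R\cup\{s\}}=\{M\in\mathcal{M}_R\mid s\in M\}$, valid for every $R\subseteq S$ and $s\in S$.

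First I would check that $P$ and $Q$ are nonterminal, so that $\Opt(Q)=\{Q\cup\{t\}\mid t\in S\setminus Q\}$: since $\Opt(P)\cap X_J\ne\emptyset$, $P$ has an option and hence is non-generating, so $\mathcal{M}_P\ne\emptyset$; as $P,Q\in X_I$ means $\lceil P\rceil=\lceil Q\rceil$, the preceding proposition gives $\mathcal{M}_Q=\mathcal{M}_P\ne\emptyset$, so $Q$ lies inside a maximally non-generating set and is non-generating, hence nonterminal, too. Next I would locate $s$: from $\mathcal{M}_{P\cup\{s\}}\subseteq\mathcal{M}_P$ we get $I=\bigcap\mathcal{M}_P\subseteq\bigcap\mathcal{M}_{P\cup\{s\}}=\lceil P\cup\{s\}\rceil=J$, and $X_I\ne X_J$ forces $I\ne J$, so not every $M\in\mathcal{M}_P$ contains $s$ (otherwise $\mathcal{M}_{P\cup\{s\}}=\mathcal{M}_P$ and $J=I$); hence $s\notin\bigcap\mathcal{M}_P=I$. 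Since $Q\subseteq\lceil Q\rceil=I$, this gives $s\notin Q$, so $Q\cup\{s\}\in\Opt(Q)$. Finally, using $\mathcal{M}_Q=\mathcal{M}_P$ in the identity above, $\mathcal{M}_{Q\cup\{s\}}=\{M\in\mathcal{M}_Q\mid s\in M\}=\{M\in\mathcal{M}_P\mid s\in M\}=\mathcal{M}_{P\cup\{s\}}$, so $\lceil Q\cup\{s\}\rceil=\bigcap\mathcal{M}_{P\cup\{s\}}=\lceil P\cup\{s\}\rceil=J$; thus $Q\cup\{s\}\in X_J$ and $\Opt(Q)\cap X_J\ne\emptyset$.

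I do not expect a genuine obstacle here: the argument is pure set-theoretic bookkeeping with the families $\mathcal{M}_R$, driven entirely by the substitution $\mathcal{M}_Q=\mathcal{M}_P$. The one place to be careful is the deduction $s\notin Q$: there it matters that one uses $Q\subseteq\lceil Q\rceil=I$ (a property of $Q$, read off from $\lceil Q\rceil=\bigcap\mathcal{M}_Q$) rather than anything about $P$, and that $I\mapsto X_I$ being injective is what upgrades $X_I\ne X_J$ to $I\ne J$.
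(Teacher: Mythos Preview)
Your proof is correct and follows essentially the same approach as the paper: pick the element $s$ witnessing $P\cup\{s\}\in X_J$, use $\mathcal{M}_{R\cup\{s\}}=\{M\in\mathcal{M}_R\mid s\in M\}$ together with $\mathcal{M}_P=\mathcal{M}_Q$ to conclude $\lceil Q\cup\{s\}\rceil=\lceil P\cup\{s\}\rceil=J$. You are in fact more careful than the paper, which simply asserts $Q\cup\{r\}\in X_J$ without explicitly checking that $Q$ is nonterminal or that $r\notin Q$; your deductions of both points (via $\mathcal{M}_Q=\mathcal{M}_P\ne\emptyset$ and via $s\notin I\supseteq Q$) fill those small gaps cleanly.
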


\begin{proof}
Assume $\Opt(P)\cap X_{J}\neq\emptyset$. Then there is an $r\in S\setminus P$
such that $P\cup\{r\}\in X_{J}$. We show that $Q\cup\{r\}\in X_{J}$.
We have 
\[
\begin{aligned}\mathcal{M}_{P\cup\{r\}} & =\{M\in\mathcal{M}\mid P\cup\{r\}\subseteq M\}\\
 & =\{M\in\mathcal{M}\mid P\subseteq M\text{ and }r\in M\}\\
 & =\{M\in\mathcal{M}_{P}\mid r\in M\}
\end{aligned}
\]
and similarly $\mathcal{M}_{Q\cup\{r\}}=\{M\in\mathcal{M}_{Q}\mid r\in M\}$.
Hence
\[
\begin{aligned}\lceil Q\cup\{r\}\rceil & =\bigcap\mathcal{M}_{Q\cup\{r\}}\\
 & =\bigcap\{M\in\mathcal{M}_{Q}\mid r\in M\}\\
 & =\bigcap\{M\in\mathcal{M}_{P}\mid r\in M\}\\
 & =\bigcap\mathcal{M}_{P\cup\{r\}}\\
 & =\lceil P\cup\{r\}\rceil=J
\end{aligned}
\]
since $\mathcal{M}_{P}=\mathcal{M}_{Q}$.
\end{proof}
\begin{defn}
We say $X_{J}$ is an \emph{option} of $X_{I}$ if $X_{J}\cap\Opt(I)\ne\emptyset$.
The set of options of $X_{I}$ is denoted by $\Opt(X_{I})$.
\end{defn}

The following Lemma was proved in \cite{ErnstSieben}.
\begin{lem}
\label{lem:mex}If $A$ and $B$ are sets containing non-negative
integers such that $\mex(A)\in B$, then $\mex(A\cup\{\mex(B)\})=\mex(A)$.
\end{lem}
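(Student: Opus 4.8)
The plan is to unwind the two occurrences of $\mex$ directly. Write $m:=\mex(A)$ and $b:=\mex(B)$. By the definition of the minimum excludant, $\{0,1,\dots,m-1\}\subseteq A$ and $m\notin A$; likewise $b\notin B$ while every non-negative integer smaller than $b$ lies in $B$.

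The key observation is that $b\neq m$. Indeed, the hypothesis says $m\in B$, but $b=\mex(B)\notin B$, so these two numbers cannot coincide. Consequently, adjoining $b$ to $A$ does not introduce $m$: we still have $m\notin A\cup\{b\}$, while the initial segment $\{0,1,\dots,m-1\}\subseteq A\subseteq A\cup\{b\}$ is retained. Therefore the smallest non-negative integer missing from $A\cup\{b\}$ is exactly $m$, i.e. $\mex(A\cup\{\mex(B)\})=\mex(A\cup\{b\})=m=\mex(A)$, which is the claim.

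There is essentially no obstacle here; the only point requiring care is the elementary fact that $\mex(B)\notin B$ whereas $m\in B$, which forces $\mex(B)\ne m$ and hence prevents the newly added element from filling the gap at $m$. (Note that the lemma does not assert anything about whether $b<m$ or $b>m$; both are possible, and neither case affects the conclusion, since in either case $A\cup\{b\}$ contains $\{0,\dots,m-1\}$ and omits $m$.)
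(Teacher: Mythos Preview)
Your proof is correct. The paper does not actually prove this lemma; it simply cites \cite{ErnstSieben} for it. Your direct argument---observing that $\mex(B)\ne\mex(A)$ because $\mex(A)\in B$ while $\mex(B)\notin B$, so adjoining $\mex(B)$ to $A$ cannot fill the gap at $\mex(A)$---is exactly the natural one-line verification, and nothing more is needed.
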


\begin{prop}
\label{prop:mainThm}If $P,Q\in X_{I}$ and $\pty(P)=\pty(Q)$, then
$\nim(P)=\nim(Q)$.
\end{prop}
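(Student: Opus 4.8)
The plan is to prove the statement by strong induction on $|S \setminus P|$, the number of remaining moves available from $P$ (equivalently, on the height of $P$ in the game digraph). The base case consists of terminal positions. First I would observe that if $P$ and $Q$ lie in the same structure class $X_I$, then by the preceding proposition $\mathcal{M}_P = \mathcal{M}_Q$, and in particular $P$ is terminal (i.e.\ generating) if and only if $\mathcal{M}_P = \emptyset$, if and only if $Q$ is terminal. If both are terminal then $\nim(P) = 0 = \nim(Q)$ regardless of parity, so the base case is immediate.

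For the inductive step, suppose $P, Q \in X_I$ are both nonterminal with $\pty(P) = \pty(Q)$. The key point is to relate the nimbers of the options of $P$ to those of the options of $Q$ using Proposition~\ref{prop:optStruct}. Every option $P \cup \{r\}$ of $P$ lies in some structure class $X_J$ with $X_J \in \Opt(X_I)$, and conversely for each $X_J \in \Opt(X_I)$ there is such an option; the same holds for $Q$ by Proposition~\ref{prop:optStruct}. Moreover, each option of $P$ has parity $\pty(P) + 1 = \pty(Q) + 1$, the parity of each option of $Q$, and each option has strictly fewer remaining moves, so the induction hypothesis applies: any two options of $P$ and $Q$ landing in the same class $X_J$ have the same nim number. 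Therefore, writing $n_J$ for this common value when $X_J \in \Opt(X_I)$ (using only options of the correct parity), I would like to conclude
\[
\nim(P) = \mex\{\,n_J \mid X_J \in \Opt(X_I)\,\} = \nim(Q).
\]

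The subtlety I expect to be the main obstacle is that the set of classes reachable from $P$ in one move, while equal to $\Opt(X_I)$, may be reached by options of $P$ of only one parity, whereas a priori $\nim(P)$ is the mex over \emph{all} options of $P$, not just those of a single parity. Here is where the induction hypothesis must be used carefully: two options of $P$ in the same class $X_J$ but of opposite parity need not have the same nim number, so $\nim(\Opt(P))$ is not simply $\{n_J : X_J \in \Opt(X_I)\}$. I would resolve this by a more delicate argument. Since all options of $P$ have the same parity (namely $\pty(P)+1$), the induction hypothesis already tells us that options of $P$ in the same class have equal nim numbers, so $\nim(\Opt(P)) = \{n_J^{(P)} : X_J \in \Opt(X_I)\}$ where $n_J^{(P)}$ is the nim number of any option of $P$ in $X_J$. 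The same holds for $Q$ with value $n_J^{(Q)}$, and by the induction hypothesis (applied to an option of $P$ and an option of $Q$ in $X_J$, which have the same parity $\pty(P)+1 = \pty(Q)+1$) we get $n_J^{(P)} = n_J^{(Q)}$. Hence $\nim(\Opt(P)) = \nim(\Opt(Q))$ and taking $\mex$ finishes the proof. The reason this argument is not circular is that the induction is genuinely on the number of remaining moves, and every option has strictly fewer; Lemma~\ref{lem:mex} is not needed for this particular proposition, though I expect it is used in a later refinement where one drops the parity hypothesis.
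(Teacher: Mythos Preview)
Your argument has a genuine gap. You claim that every option $P\cup\{r\}$ of $P$ lies in some $X_J$ with $X_J\in\Opt(X_I)$, but this is false whenever $P\subsetneq I$: choosing $r\in I\setminus P$ gives an option that stays in $X_I$, and $X_I\notin\Opt(X_I)$ (by definition $\Opt(X_I)$ is built from options of the representative $I$, all of which leave $X_I$). Proposition~\ref{prop:optStruct} only compares classes $X_J\neq X_I$, so it cannot repair this. Incidentally, the ``subtlety'' you worry about---options of $P$ having two different parities---never occurs: every option of $P$ has parity $1-\pty(P)$, so that paragraph addresses a non-issue.

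When both $P\neq I$ and $Q\neq I$, your idea can be salvaged: both then have options inside $X_I$ and, by induction, those options share a common nim value, so $\nim(\Opt(P))=\nim(\Opt(Q))$ after all. The case your argument truly misses is $P\neq I$ while $Q=I$, with $\pty(P)=\pty(I)$. Here $P$ has options in $X_I$ but $Q=I$ has none, so $\nim(\Opt(P))\supsetneq\nim(\Opt(Q))$ in general, and one must argue that the extra values do not equal $\mex(\nim(\Opt(Q)))=\nim(I)$. The paper does exactly this: take $M\in\Opt(P)\cap X_I$; since $\pty(M)\neq\pty(I)$ we have $M\neq I$, so $M$ has a further option $R\in X_I$ with $\pty(R)=\pty(I)$; by induction $\nim(R)=\nim(I)$, hence $\nim(I)\in\nim(\Opt(M))$ and therefore $\nim(M)\neq\nim(I)$. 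This step is precisely Lemma~\ref{lem:mex}, which---contrary to your closing remark---is essential to the proof.
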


\begin{proof}
Let
\[
Z:=\{(P,Q)\mid\lceil P\rceil=\lceil Q\rceil\text{ and }\pty(P)=\pty(Q)\}.
\]
We say $(P,Q)\succeq(M,N)$ when $P\subseteq M$ and $Q\subseteq N$.
Then $(Z,\succeq)$ is a partially ordered set with minimum element
$(S,S)$. We proceed by structural induction on $Z$. Let $(P,Q)\in Z$.
The statement clearly holds if $P=Q$. In particular, it holds if
$(P,Q)=(S,S)$. Otherwise we let $I:=\lceil P\rceil=\lceil Q\rceil$
and consider several cases.

First, assume $P\neq I\neq Q$. Then both $P$ and $Q$ have options
in $X_{I}$. In fact, $P\cup\{s\}\in\Opt(P)\cap X_{I}$ for each $s\in I\setminus P$.
If $M$ and $N$ are options of $P$ and $Q$ in $X_{I}$ respectively,
then $\pty(M)=\pty(N)$. Hence $\nim(M)=\nim(N)$ by induction since
$(P,Q)\succ(M,N)$ in $Z$. If $P$ has an option $M$ in some $X_{J}\neq X_{I}$,
then $Q$ also has an option $N$ in $X_{J}$ by Proposition~\ref{prop:optStruct}.
Since $\lceil M\rceil=J=\lceil N\rceil$ and $\pty(M)=\pty(N)$, we
have $(P,Q)\succ(M,N)$. Hence $\nim(M)=\nim(N)$ by induction. This
proves that $\nim(\Opt(P))=\nim(\Opt(Q))$. Thus $\nim(P)=\nim(Q)$.

Now, assume $P\ne I=Q$. In this case $Q$ does not have any options
in $X_{I}$. We still have $\nim(\Opt(Q))\subseteq\nim(\Opt(P))$
by Proposition~\ref{prop:optStruct}. Let $M$ be an option of $P$
in $X_{I}$. Since $\pty(M)\ne\pty(I)$, $M$ is different from $I$.
So $M$ has an option $R\in X_{I}$. Then $\pty(R)=\pty(I)=\pty(Q)$
and $\lceil R\rceil=I=\lceil Q\rceil$, so $(P,Q)\succ(R,Q)$. Hence
$\nim(R)=\nim(Q)$ by induction. This implies
\[
\mex(\overbrace{\nim(\Opt(Q))}^{A})=\nim(Q)=\nim(R)\in\overbrace{\nim(\Opt(M))}^{B},
\]
 where $A:=\nim(\Opt(Q))$ and $B:=\nim(\Opt(M)$. Thus, by Lemma
\ref{lem:mex},
\[
\mex(\overbrace{\nim(\Opt(Q))}^{A}\cup\{\overbrace{\nim(M)}^{\mex(B)}\})=\mex(\overbrace{\nim(\Opt(Q))}^{A}).
\]
If $N\in\Opt(P)$ then either $N\not\in X_{I}$ or $N\in X_{I}$.
In the first case, Proposition~\ref{prop:optStruct} implies that
$Q$ has an option $L$ with $\lceil L\rceil=\lceil N\rceil$. Since
$\pty(L)=\pty(N)$, $\nim(N)=\nim(L)$ by induction. So $\nim(N)=\nim(L)\in\nim(\Opt(Q))$.
In the second case $\nim(M)=\nim(N)$ by induction since $(P,Q)\succ(M,N)$.
Hence
\begin{align*}
\nim(P) & =\mex(\nim(\Opt(P)))\\
 & =\mex(\nim(\Opt(Q))\cup\{\nim(M)\})\\
 & =\mex(\nim(\Opt(Q))=\nim(Q)
\end{align*}
since $\nim(\Opt(P))=\nim(\Opt(Q))\cup\{\nim(M)\}$.
\end{proof}
\begin{defn}
The \emph{type }of the structure class $X_{I}$ is
\[
\type(X_{I}):=(\pty(I),\nim_{0}(X_{I}),\nim_{1}(X_{I})).
\]
 If $I=S$ then $\nim_{0}(X_{S}):=0$ and $\nim_{1}(X_{S}):=0$. If
$I\ne S$ then
\[
\begin{aligned}\nim_{\pty(I)}(X_{I}) & :=\mex(\nim_{1-\pty(I)}(\Opt(X_{I})),\\
\nim_{1-\pty(I)}(X_{I}) & :=\mex(\nim_{\pty(I)}(\Opt(X_{I}))\cup\{\nim_{\pty(I)}(X_{I})\})
\end{aligned}
\]
are defined recursively. We call the recursive computation of types
using the options of structure classes \emph{type calculus}.
\end{defn}

Note that $X_{S}$ is the only structure class without options. 
\begin{example}
Assume that $\Opt(X_{I})=\{X_{J},X_{K}\}$ with $\type(X_{J})=(0,0,3)$
and $\type(X_{K})=(1,2,0)$. If $\pty(I)=0$ then
\[
\begin{aligned}\nim_{0}(X_{I}) & =\mex(\{\nim_{1}(X_{J}),\nim_{1}(X_{K})\})=\mex(\{3,0\})=1,\\
\nim_{1}(X_{I}) & =\mex(\{\nim_{0}(X_{J}),\nim_{0}(X_{K}),\nim_{0}(X_{I})\})=\mex(\{0,2,1\})=3
\end{aligned}
\]
and $\type(X_{I})=(0,1,3)$. If $\pty(I)=1$ then
\[
\begin{aligned}\nim_{1}(X_{I}) & =\mex(\{\nim_{0}(X_{J}),\nim_{0}(X_{K})\})=\mex(\{0,2\})=1,\\
\nim_{0}(X_{I}) & =\mex(\{\nim_{1}(X_{J}),\nim_{1}(X_{K}),\nim_{1}(X_{I})\})=\mex(\{3,0,1\})=2
\end{aligned}
\]
and $\type(X_{I})=(1,2,1)$.
\end{example}

The type of a structure class $X_{I}$ encodes the parity of $I$
and the nim numbers of the positions in $X_{I}$. 
\begin{prop}
If $P\in X_{I}$ then $\nim(P)=\nim_{\pty(P)}(X_{I})$.
\end{prop}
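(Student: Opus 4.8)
The plan is to prove $\nim(P) = \nim_{\pty(P)}(X_I)$ by structural induction on the poset $(2^S, \supseteq)$, that is, processing positions from $S$ downward (from terminal position toward the starting position). The base case is $P = S$: here $S$ is generating (since $W \subseteq S = \tau(S)$), so $S$ is a terminal position with $\nim(S) = 0$, and $\lceil S \rceil = \bigcap \mathcal{M}_S = \bigcap \emptyset = S$, so $P \in X_S$ and $\nim_{\pty(S)}(X_S) = 0$ by the convention in the definition of type. For the inductive step, let $P \in X_I$ with $P \neq S$, and split into the same two cases that appear in the proof of Proposition~\ref{prop:mainThm}: either $P \neq I$ or $P = I$.

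Consider first the case $P = I$. Here every option $P \cup \{r\}$ of $P$ has $\lceil P \cup \{r\} \rceil = J$ for some $J \in \mathcal{I}$; by the computation in the proof of Proposition~\ref{prop:optStruct}, $J$ ranges exactly over $\{\lceil M \cup \{r\}\rceil : r \in S \setminus I\}$, and since $P = I$ one checks that no option lands back in $X_I$ (any option $I \cup \{r\}$ with $I \cup \{r\} \subseteq M$ for some $M \in \mathcal{M}_I$ would force $r \in \bigcap \mathcal{M}_I = I$, a contradiction; if no such $M$ exists then $\lceil I \cup \{r\}\rceil \supsetneq I$). Thus $\Opt(X_I) = \{X_J : J = \lceil I \cup \{r\}\rceil, r \in S\setminus I\}$, and each option $P \cup \{r\}$ has parity $1 - \pty(I)$. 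By the induction hypothesis, $\nim(P \cup \{r\}) = \nim_{1-\pty(I)}(X_J)$ where $X_J \in \Opt(X_I)$. Taking $\mex$ over all options gives $\nim(P) = \mex\big(\nim_{1-\pty(I)}(\Opt(X_I))\big) = \nim_{\pty(I)}(X_I) = \nim_{\pty(P)}(X_I)$, matching the first defining equation of the type.

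Now consider the case $P \neq I$, so $\pty(P)$ may be either value. The options of $P$ split: those $P \cup \{s\}$ with $s \in I \setminus P$ remain in $X_I$ (since $\mathcal{M}_{P \cup \{s\}} = \mathcal{M}_P$ when $s \in I = \bigcap \mathcal{M}_P$), and have parity $1 - \pty(P)$; and those $P \cup \{s\}$ with $s \notin I$ land in some $X_J \neq X_I$. For the in-class options, there is at least one such (since $P \subsetneq I$), and by induction each has nim number $\nim_{1-\pty(P)}(X_I)$. For the out-of-class options, by Proposition~\ref{prop:optStruct} these realize exactly the structure classes in $\Opt(X_I)$, and by induction an option landing in $X_J$ has nim number $\nim_{\pty(P \cup \{s\})}(X_J) = \nim_{1-\pty(P)}(X_J)$. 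Hence $\nim(\Opt(P)) = \nim_{1-\pty(P)}(\Opt(X_I)) \cup \{\nim_{1-\pty(P)}(X_I)\}$, and so
\[
\nim(P) = \mex\big(\nim_{1-\pty(P)}(\Opt(X_I)) \cup \{\nim_{1-\pty(P)}(X_I)\}\big).
\]
If $\pty(P) = \pty(I)$ this is $\mex(\nim_{1-\pty(I)}(\Opt(X_I)) \cup \{\nim_{1-\pty(I)}(X_I)\})$; but $\nim_{1-\pty(I)}(X_I)$ itself equals $\mex(\nim_{\pty(I)}(\Opt(X_I)) \cup \{\nim_{\pty(I)}(X_I)\})$, which need not help directly — here I instead use that any $P \neq I$ in $X_I$ has an in-class option $R$ with $\pty(R) = \pty(I)$, and $\nim_{\pty(I)}(X_I)$ was already shown (in the $P=I$ analysis, or by a parallel argument) to be the $\mex$ of the opposite-parity option values, so the extra element $\nim_{1-\pty(P)}(X_I)$ is absorbed exactly as in Lemma~\ref{lem:mex}. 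Carefully matching these $\mex$ expressions against the two defining recursions for $\nim_0, \nim_1$ in the definition of type — and checking the absorption via Lemma~\ref{lem:mex} — is the one genuinely delicate point; everything else is bookkeeping already done inside Propositions~\ref{prop:optStruct} and~\ref{prop:mainThm}. Indeed, the cleanest route is simply to invoke Proposition~\ref{prop:mainThm} to collapse all same-parity positions in $X_I$ to a single nim value, then verify by induction that this common value satisfies the type recursion, reducing the whole proof to the two equations defining $\type(X_I)$.
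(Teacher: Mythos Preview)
Your argument is correct and uses the same ingredients as the paper (structural induction, Proposition~\ref{prop:optStruct}, and the \(\mex\)-absorption of Lemma~\ref{lem:mex}), but your case split is different: you divide by \(P=I\) versus \(P\neq I\), while the paper divides by \(\pty(P)=\pty(I)\) versus \(\pty(P)=1-\pty(I)\). The paper's split is cleaner because for \(\pty(P)=\pty(I)\) it simply invokes Proposition~\ref{prop:mainThm} to get \(\nim(P)=\nim(I)\) and is done; you instead re-enter the option analysis for \(P\neq I\) with \(\pty(P)=\pty(I)\) and must absorb the extra term \(\nim_{1-\pty(I)}(X_I)\) via Lemma~\ref{lem:mex}. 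That absorption \emph{does} work (take \(A=\nim_{1-\pty(I)}(\Opt(X_I))\), \(B=\nim_{\pty(I)}(\Opt(X_I))\cup\{\nim_{\pty(I)}(X_I)\}\); then \(\mex(A)=\nim_{\pty(I)}(X_I)\in B\) trivially, so \(\mex(A\cup\{\mex(B)\})=\mex(A)\)), but you leave it as ``the one genuinely delicate point'' without writing it down. Your closing suggestion to just invoke Proposition~\ref{prop:mainThm} is exactly what the paper does, so you have essentially rediscovered its organization after a detour.
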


\begin{proof}
We use structural induction on the positions, together with Propositions~\ref{prop:optStruct}
and \ref{prop:mainThm}. The statement is clearly true for $I=S$.

Any option $Q$ of position $I$ is in $X_{J}$ for some $X_{J}\in\Opt(X_{I})$.
On the other hand, if $X_{J}\in\Opt(X_{I})$ then $X_{J}$ contains
an option $Q$ of $I$. The parity $\pty(Q)=1-\pty(I)$ of $Q$ is
the opposite of the parity of $I$. Hence $\nim(I)=\mex(\nim(\Opt(I)))=\nim_{\pty(I)}(X_{I})$
by induction. 

First assume that $P$ is a position in $X_{I}$ such that $\pty(P)=\pty(I)$.
Then $\nim(P)=\nim(I)$ by Proposition~\ref{prop:mainThm}. Thus
$\nim(P)=\nim(I)=\nim_{\pty(I)}(X_{I})=\nim_{\pty(P)}(X_{I}).$

Now assume that $P$ is a position in $X_{I}$ such that $\pty(P)=1-\pty(I)$.
Then the options of $P$ have parity $\pty(I)$. Since $P$ is strictly
smaller than $I$, $P$ must have an option in $X_{I}$. Every $Q$
in $\Opt(P)\cap X_{I}$ satisfies $\nim(Q)=\nim_{\pty(Q)}(X_{I})=\nim_{\pty(I)}(X_{I})$
by induction. Proposition~\ref{prop:optStruct} implies that every
option of $P$ that is not in $X_{I}$ must be a position in $X_{J}$
for some $X_{J}\in\Opt(X_{I})$. On the other hand, Proposition~\ref{prop:optStruct}
also implies that if $X_{J}\in\Opt(X_{I})$ then $X_{J}$ contains
an option of $P$. Every $Q$ in $\Opt(P)\cap X_{J}$ with $X_{J}\in\Opt(X_{I})$
satisfies $\nim(Q)=\nim_{\pty(Q)}(X_{J})=\nim_{\pty(I)}(X_{J})$ by
induction. Thus
\[
\begin{aligned}\nim(P) & =\mex(\nim(\Opt(P)))\\
 & =\mex(\nim((\Opt(P)\cap\bigcup\Opt(X_{I}))\cup(\Opt(P)\cap X_{I}))\\
 & =\mex(\nim_{\pty(I)}(\Opt(X_{I}))\cup\{\nim_{\pty(I)}(X_{I})\})\\
 & =\nim_{1-\pty(I)}(X_{I})=\nim_{\pty(P)}(X_{I}).
\end{aligned}
\]

\end{proof}
Note that $X_{I}=\{I\}$ is possible. In this case $X_{I}$ contains
no position with parity $1-\pty(I)$. Also note that the nim number
of the game is the nim number of the starting position $\emptyset$.
So $\nim(\GEN(S,W))=\nim(\emptyset)=\mex_{0}(X_{\Phi})$ is the second
component of $\type(X_{\Phi})$.

\subsection{Structure diagrams\label{sec:structureDiagrams}}

The \emph{structure digraph} of $\GEN(S,W)$ has vertex set $\{X_{I}\mid I\in\mathcal{I}\}$
and arrow set $\{(X_{I},X_{J})\mid X_{J}\in\Opt(X_{I})\}$. We visualize
the structure digraph with a \emph{structure diagram} that also shows
the type of each structure class. Within a structure diagram, a vertex
$X_{I}$ is represented by a triangle pointing up or down depending
on the parity of $I$. The triangle points down when $\pty(I)=1$
and points up when $\pty(I)=0$. The numbers within each triangle
represent the nim numbers of the positions within the structure class.
The first number is the common nim number of all even positions in
$X_{I}$, while the second number is the common nim number of all
odd positions in $X_{I}$.

We call an automorphism $\alpha$ of the structure digraph \emph{size
preserving} if $|I|=|J|$ for all $\alpha(X_{I})=X_{J}$. It is often
useful to work with the quotient of the structure digraph with respect
to orbit equivalence determined by the size preserving automorphisms
of the structure digraph. We call the corresponding structure diagram
the \emph{orbit quotient structure diagram}. We draw these quotient
diagrams using representative structure classes. Since orbit equivalent
structure classes have the same type, the nim number of the game can
be determined using the orbit quotient structure diagram. 

\begin{figure}
\begin{tabular}{ccc}
\begin{tikzpicture}[scale=1.2]
\node (p1) at (0,-1) {$X_{\{0\}}$};
\node (p2) at (-1,-2) {$X_{\{-1,0\}}$};
\node (p3) at (0,-3) {$X_{\{-1,0,1\}}$};
\node (p4) at (1,-2) {$X_{\{0,1\}}$};
\draw[->] (p1) -- (p2);
\draw[->] (p2) -- (p3);
\draw[->] (p1) -- (p4);
\draw[->] (p4) -- (p3);
\end{tikzpicture} & \includegraphics[scale=0.5]{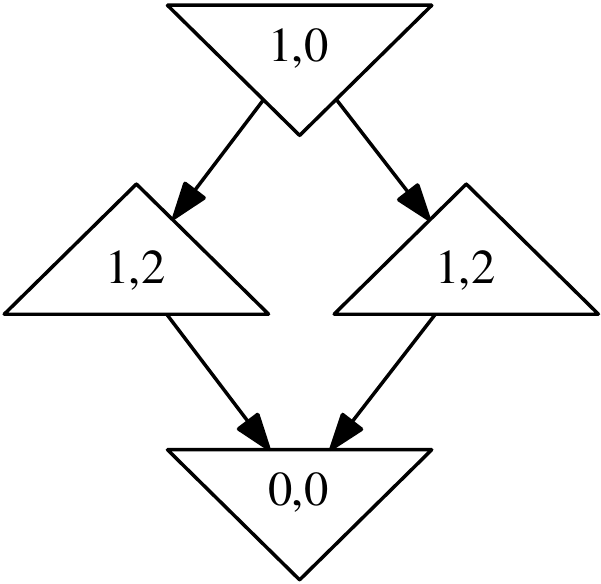} & \includegraphics[scale=0.5]{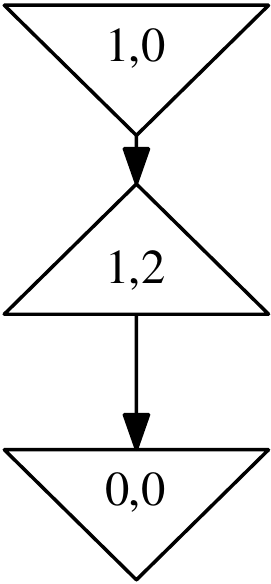}\tabularnewline
\end{tabular}

\caption{\label{fig:3InLine-1}Structure digraph, structure diagram, and orbit
quotient structure diagram of $\protect\GEN(S)$ with $S=\{-1,0,1\}$.}
\end{figure}

\begin{example}
The structure digraph and structure diagrams of $\GEN(S)$ with the
affine convex geometry on $S=\{-1,0,1\}$ considered in Examples~\ref{exa:3InLine}
and \ref{exa:3InLineB} appear in Figure~\ref{fig:3InLine-1}. We
demonstrate type calculus by verifying that $\nim(\GEN(S))=\nim(\emptyset)=\nim_{0}(X_{\{0\}})=1$.
We have 
\[
\begin{aligned}\nim_{1}(X_{\{0\}}) & =\mex(\{\nim_{0}(X_{\{-1,0\}}),\nim_{0}(X_{\{0,1\}})\})=\mex(\{1\})=0,\\
\nim_{0}(X_{\{0\}}) & =\mex(\{\nim_{1}(X_{\{-1,0\}}),\nim_{1}(X_{\{0,1\}})\}\cup\{\nim_{1}(X_{\{0\}})\})\\
 & =\mex(\{2\}\cup\{0\})=1.
\end{aligned}
\]
\end{example}

\begin{rem}
If we make a move in the achievement game on an affine convex geometry,
then the rest of the game is essentially played on a convex geometry
that was created from the original convex geometry by the deletion
of the selected point. Since the selected points do not necessarily
form a convex set, the deletion of these points may result in a convex
geometry in which the empty set is not convex. 
\end{rem}

\begin{figure}
\begin{tabular}{cccc}
\begin{tikzpicture}
\node[label={[label distance=-2mm]180:$3$}] at (-1,0) {$\bullet$};
\node[label={[label distance=-2mm]$2$}] at (0,0) {$\scriptstyle\otimes$};
\node[label={[label distance=-2mm]0:$4$}] at (1,0) {$\bullet$};
\node[label={[label distance=-2mm]$1$}] at (0,1) {$\scriptstyle\otimes$};
\end{tikzpicture} & \begin{tikzpicture}
\node at (-1,0) {$\bullet$};
\node at (0,0) {$\scriptstyle\otimes$};
\node at (1,0) {$\bullet$};
\node at (0,1) {$\circ$};
\end{tikzpicture} & \begin{tikzpicture}
\node at (-1,0) {$\bullet$};
\node at (0,0) {$\circ$};
\node at (1,0) {$\bullet$};
\node at (0,1) {$\scriptstyle\otimes$};
\end{tikzpicture} & \begin{tikzpicture}
\node at (-1,0) {$\circ$};
\node at (0,0) {$\scriptstyle\otimes$};
\node at (1,0) {$\bullet$};
\node at (0,1) {$\scriptstyle\otimes$};
\end{tikzpicture}\tabularnewline
$\{\{1,3\},\{1,4\},\{2,3,4\}\}$ & $\{\{3\},\{4\}\}$ & $\{\{3,4\}\}$ & $\{\{1\},\{2,4\}\}$\tabularnewline
 &  &  & \tabularnewline
\includegraphics[scale=0.5]{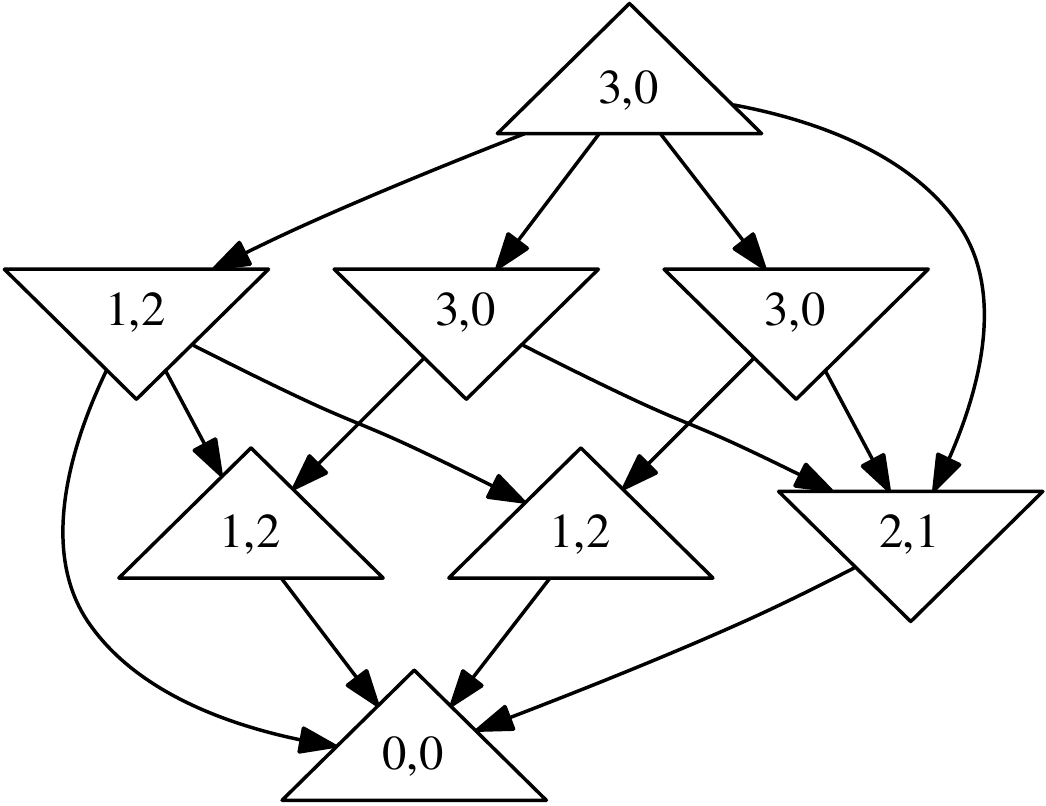} & \includegraphics[scale=0.5]{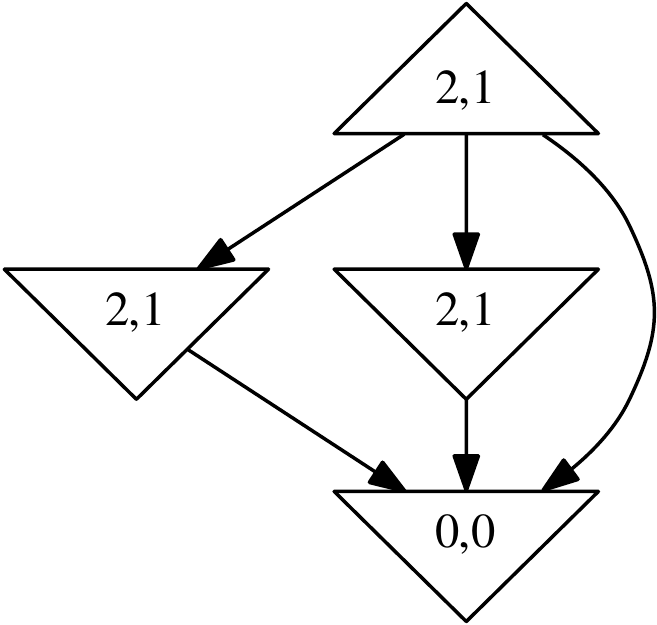} & \includegraphics[scale=0.5]{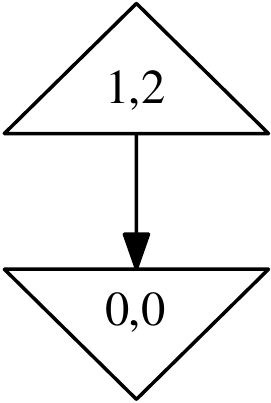} & \includegraphics[scale=0.5]{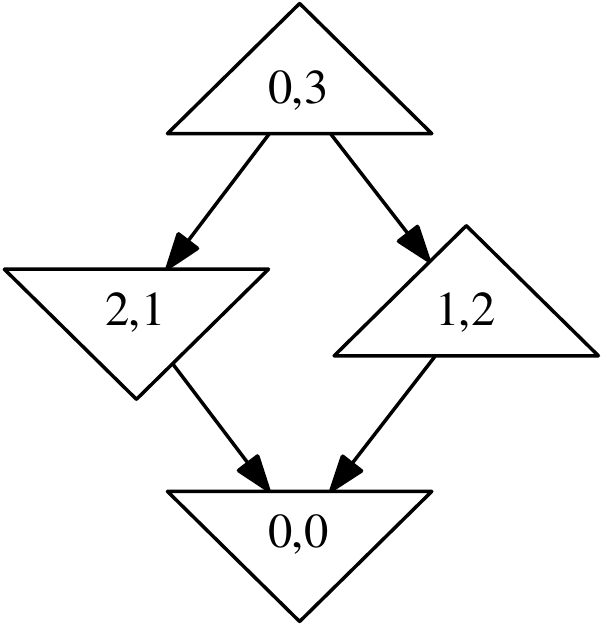}\tabularnewline
\end{tabular}

\caption{\label{fig:deleted}Structure diagram of the achievement game on an
affine convex geometry and some of its deleted convex geometries.
The second row contains the maximally non-generating sets. Deleted
points are shown with $\circ$ and the elements of the goal set are
shown with ${\scriptstyle \otimes}$. }
\end{figure}

\begin{example}
Figure~\ref{fig:deleted} shows the structure diagrams of the achievement
game on an affine convex geometry and the effect of deletion on the
structure diagrams. Note that deletion of a point produces a structure
diagram that is a sub-diagram of the original diagram. This sub-diagram
starts at an option of $X_{\Phi}$ and it has reversed parities and
nim numbers.
\end{example}

\section{Winning subsets containing only extreme points \label{sec:extremeCharacterizations}}

In this section we characterize the nim number of $\GEN(S,W)$ where
$W\subseteq\Ex(S)$.
\begin{prop}
\label{prop:maxConvex}If $W\subseteq\Ex(S)$ then the set $\mathcal{M}$
of maximally non-generating subsets of $\GEN(S,W)$ is $\{M_{v}\mid v\in W\}$,
where $M_{v}:=S\setminus\{v\}$.
\end{prop}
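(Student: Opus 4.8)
The plan is to prove the two inclusions $\{M_v\mid v\in W\}\subseteq\mathcal{M}$ and $\mathcal{M}\subseteq\{M_v\mid v\in W\}$ separately, using nothing beyond the definition of an extreme point and the fact that $S=\tau(S)$ is a generating set.

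First I would verify that each $M_v$ with $v\in W$ lies in $\mathcal{M}$. Since $v\in W\subseteq\Ex(S)$, the definition of extreme point gives $v\notin\tau(S\setminus\{v\})=\tau(M_v)$; as $v\in W$, this already shows $W\not\subseteq\tau(M_v)$, so $M_v$ is non-generating. The only subset strictly between $M_v$ and $S$ is $S$ itself, and $S$ is generating because $W\subseteq S=\tau(S)$ (using that $S$ is convex, so $\tau(S)=S$). Hence every proper superset of $M_v$ in $S$ is generating, i.e.\ $M_v$ is maximally non-generating.

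For the reverse inclusion I would take an arbitrary $M\in\mathcal{M}$. Being non-generating, there is some $w\in W$ with $w\notin\tau(M)$; by the extensive property $M\subseteq\tau(M)$, this forces $w\notin M$, so $M\subseteq S\setminus\{w\}=M_w$. If $M\neq M_w$, then $M_w$ is a proper superset of $M$ in $S$ and hence generating by maximality of $M$, contradicting the fact (established in the previous paragraph, where only $w\in W\subseteq\Ex(S)$ was used) that $M_w$ is non-generating. Therefore $M=M_w\in\{M_v\mid v\in W\}$, and the two sets coincide.

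I do not anticipate a real obstacle here. The only point that genuinely requires the hypothesis $W\subseteq\Ex(S)$ is the non-generating property of each $M_v$: without extremality, $v$ could already satisfy $v\in\tau(S\setminus\{v\})$ and $M_v$ could be generating. The maximality half is essentially automatic, since each $M_v$ is a coatom of the subset lattice of $S$ and $S$ is the unique generating superset in sight.
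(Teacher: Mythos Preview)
Your proof is correct and follows essentially the same route as the paper's: both directions are argued exactly as in the paper, using extremality of $v$ to get $v\notin\tau(M_v)$ for non-generation, the coatom property of $M_v$ for maximality, and then for an arbitrary $M\in\mathcal{M}$ picking $w\in W\setminus M$ and invoking maximality against the already-established non-generating set $M_w$. Your write-up is slightly more explicit (e.g.\ invoking $M\subseteq\tau(M)$ to pass from $w\notin\tau(M)$ to $w\notin M$), but there is no substantive difference.
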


\begin{proof}
We show that $M_{v}$ for $v\in W$ is a maximally non-generating
set. Since $v$ is an extreme point of $S$, $v\not\in\tau(M_{v})$.
So $M_{v}$ is a non-generating set. If $M_{v}\subset N\subseteq S$
then $N=S$ is a generating set. So $M_{v}$ is maximally non-generating.

We show that every maximally non-generating set is $M_{v}$ for some
$v\in W$. Let $M$ be a maximally non-generating set. Since $M$
is non-generating, there must be a $v$ in $W\setminus M$. We clearly
have $M\subseteq M_{v}$, which means $M_{v}$ a non-generating superset
of $M$. Thus $M=M_{v}$ since $M$ is maximally non-generating.
\end{proof}
\begin{cor}
\label{cor:intersectSub}If $W\subseteq\Ex(S)$ then the set of intersection
subsets of $\GEN(S,W)$ is $\mathcal{I}=\{S\setminus V\mid V\subseteq W\}$. 
\end{cor}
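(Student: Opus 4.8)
The plan is to compute $\mathcal{I}$ directly from its definition $\mathcal{I}=\{\bigcap\mathcal{N}\mid\mathcal{N}\subseteq\mathcal{M}\}$, using the explicit description of $\mathcal{M}$ provided by Proposition~\ref{prop:maxConvex}. Since $\mathcal{M}=\{M_v\mid v\in W\}$ with $M_v=S\setminus\{v\}$, every subfamily $\mathcal{N}\subseteq\mathcal{M}$ has the form $\mathcal{N}=\{M_v\mid v\in V\}$ for a uniquely determined $V\subseteq W$, so the assignment $V\mapsto\{M_v\mid v\in V\}$ is a bijection between subsets of $W$ and subfamilies of $\mathcal{M}$.

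First I would record the elementary set-theoretic identity
\[
\bigcap_{v\in V}M_v=\bigcap_{v\in V}(S\setminus\{v\})=S\setminus\bigcup_{v\in V}\{v\}=S\setminus V,
\]
valid for every $V\subseteq W$. In the degenerate case $V=\emptyset$ this reads $\bigcap\emptyset=S=S\setminus\emptyset$, which is consistent with the remark in the excerpt that $S$ is always an intersection subset. Combining this with the bijection above gives
\[
\mathcal{I}=\Bigl\{\bigcap\mathcal{N}\;\Big|\;\mathcal{N}\subseteq\mathcal{M}\Bigr\}=\Bigl\{\bigcap_{v\in V}M_v\;\Big|\;V\subseteq W\Bigr\}=\{S\setminus V\mid V\subseteq W\},
\]
which is exactly the claimed equality.

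There is essentially no obstacle here: the only point requiring a word of care is the empty-intersection convention, and the paper has already flagged that $\bigcap\emptyset=S$, so it suffices to note that this matches $S\setminus\emptyset$. I would therefore write the proof as a short two- or three-line chain of equalities, citing Proposition~\ref{prop:maxConvex} for the form of $\mathcal{M}$ and invoking the displayed distributive identity $\bigcap_{v\in V}(S\setminus\{v\})=S\setminus V$; no induction or case analysis is needed.
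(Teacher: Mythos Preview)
Your argument is correct and is exactly the intended one: the paper states this result as an immediate corollary of Proposition~\ref{prop:maxConvex} without writing out a proof, and your computation $\bigcap_{v\in V}(S\setminus\{v\})=S\setminus V$ together with the empty-intersection convention is precisely the unwinding the reader is expected to supply.
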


\begin{prop}
If $W=\{w\}\subseteq\Ex(S)$, then
\[
\nim(\GEN(S,W))=\begin{cases}
1, & \pty(S)=1\\
2, & \pty(S)=0.
\end{cases}
\]
\end{prop}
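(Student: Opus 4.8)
The plan is to apply the structure theory of Section~\ref{sec:structureTheory}, where the whole game collapses to two structure classes. By Proposition~\ref{prop:maxConvex}, since $W=\{w\}\subseteq\Ex(S)$ the only maximally non-generating set is $M_{w}:=S\setminus\{w\}$, so $\mathcal{M}=\{M_{w}\}$ and the Frattini subset is $\Phi=M_{w}$. By Corollary~\ref{cor:intersectSub} the intersection subsets are $\mathcal{I}=\{S,M_{w}\}$, hence the structure digraph has exactly the two vertices $X_{S}$ and $X_{M_{w}}$. Before computing anything I would record three easy facts: $w\in S$ forces $M_{w}\subsetneq S$, so $X_{S}\ne X_{M_{w}}$; $\lceil\emptyset\rceil=\bigcap\mathcal{M}=M_{w}$, so $\emptyset\in X_{M_{w}}=X_{\Phi}$; and $\pty(M_{w})=1-\pty(S)$ since $M_{w}$ is obtained from $S$ by deleting one point.

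Next I would pin down the arrows. Since $X_{S}$ is the class of $S$, which is terminal, it has no options. The options of $X_{M_{w}}$ are, by definition, read off from the representative $M_{w}$: as $S\setminus M_{w}=\{w\}$ we get $\Opt(M_{w})=\{M_{w}\cup\{w\}\}=\{S\}$, and $S\in X_{S}$, so $\Opt(X_{M_{w}})=\{X_{S}\}$. It is worth emphasizing that one uses the representative $M_{w}$ here and not an arbitrary member of $X_{M_{w}}$: smaller positions $P\in X_{M_{w}}$ do have options inside $X_{M_{w}}$ as well, but that is irrelevant to $\Opt(X_{M_{w}})$.

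Finally I would run type calculus. Since the representative of $X_{S}$ is $S$, $\type(X_{S})=(\pty(S),0,0)$ by definition. Writing $p:=\pty(M_{w})=1-\pty(S)$ and using $\Opt(X_{M_{w}})=\{X_{S}\}$, the recursion gives $\nim_{p}(X_{M_{w}})=\mex(\{\nim_{1-p}(X_{S})\})=\mex(\{0\})=1$ and then $\nim_{1-p}(X_{M_{w}})=\mex(\{\nim_{p}(X_{S})\}\cup\{\nim_{p}(X_{M_{w}})\})=\mex(\{0,1\})=2$. Since $\nim(\GEN(S,W))=\nim(\emptyset)=\nim_{0}(X_{\Phi})=\nim_{0}(X_{M_{w}})$, this value is $1$ exactly when $p=0$, i.e. $\pty(S)=1$, and is $2$ exactly when $p=1$, i.e. $\pty(S)=0$, which is the claim. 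There is no real obstacle here; the only care needed is the bookkeeping — that the nim number of the game is the even component of the type of $X_{M_{w}}=X_{\Phi}$ (not of $X_{S}$), and that $\Opt(X_{M_{w}})$ must be computed from the representative intersection subset $M_{w}$.
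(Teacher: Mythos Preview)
Your proof is correct and follows essentially the same approach as the paper: identify $\mathcal{M}=\{S\setminus\{w\}\}$ and $\mathcal{I}=\{M_w,S\}$ via Proposition~\ref{prop:maxConvex} and Corollary~\ref{cor:intersectSub}, then read off the nim number from the two-vertex structure diagram. The only difference is cosmetic: the paper appeals to the precomputed diagrams in Figure~\ref{fig:diagGenSW}(a) and (c), whereas you carry out the type calculus explicitly.
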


\begin{proof}
We have $\mathcal{M}=\{M\}$ and $\mathcal{I}=\{M,S\}$, where $M=S\setminus\{w\}$.
If $|S|$ is odd, then $\nim(\GEN(S,W))=1$ since $M$ is even and
the structure diagram is the one shown in Figure~\ref{fig:diagGenSW}(c).
If $|S|$ is even, then $\nim(\GEN(S))=2$ since $M$ is odd and the
structure diagram is the one shown in Figure~\ref{fig:diagGenSW}(a).
\end{proof}
\begin{figure}
\begin{tabular}{ccccccc}
\includegraphics[scale=0.5]{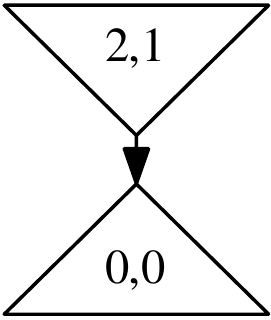} &  & \includegraphics[scale=0.5]{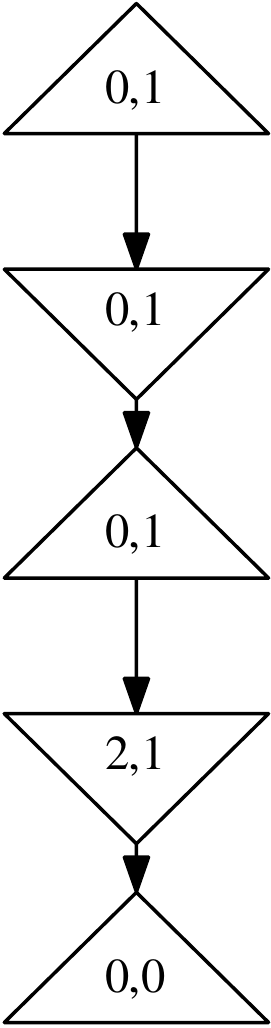} &  & \includegraphics[scale=0.5]{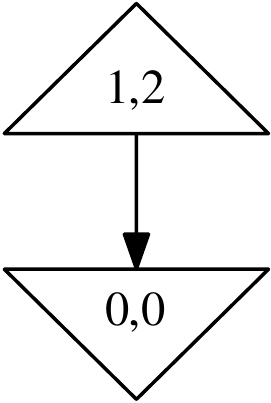} &  & \includegraphics[scale=0.5]{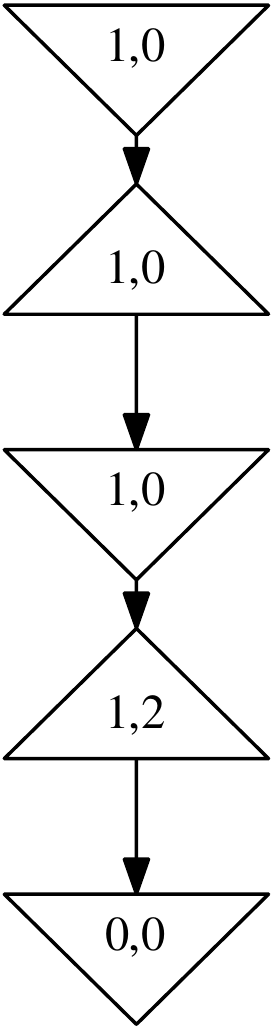}\tabularnewline
\multicolumn{3}{c}{$|S|$ is even} & $\quad$ & \multicolumn{3}{c}{$|S|$ is odd}\tabularnewline
$|W|=1$ &  & $|W|=4$ &  & $|W|=1$ &  & $|W|=4$\tabularnewline
(a) &  & (b) &  & (c) &  & (d)\tabularnewline
\end{tabular}

\caption{\label{fig:diagGenSW}Orbit quotient structure diagrams for $\protect\GEN(S,W)$,
where $W\subseteq\protect\Ex(S)$.}
\end{figure}

\begin{defn}
Let $W\subseteq\Ex(S)$. For $I\in\mathcal{I}$ we define $\delta(I):=|S\setminus I|$
to be the number of goal points missing from $I$.
\end{defn}

\begin{prop}
If $W\subseteq\Ex(S)$ with $|W|\ge2$, then
\[
\nim(\GEN(S,W))=\begin{cases}
0, & \pty(S)=0\\
1, & \pty(S)=1.
\end{cases}
\]
\end{prop}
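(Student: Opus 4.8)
The plan is to compute $\type(X_\Phi)$ by running type calculus on the structure digraph, whose shape is completely pinned down by Corollary~\ref{cor:intersectSub}: the structure classes are exactly $X_{S\setminus V}$ for $V\subseteq W$, and $X_{S\setminus V}\in\Opt(X_{S\setminus V'})$ precisely when $V'\subsetneq V$ with $|V\setminus V'|=1$ (adding one element $v\in W$ to the position moves from the class indexed by $S\setminus V'$ to the class indexed by $S\setminus V'\setminus\{v\}=S\setminus(V'\cup\{v\})$). So the structure digraph is (isomorphic to) the Boolean lattice on $W$ turned into the covering digraph of the subset order, with $X_S$ (i.e.\ $V=W$) as the unique sink. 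The parity of the index set $S\setminus V$ is $\pty(S)+|V|\bmod 2$, so parities alternate by rank along this cube. I would phrase everything in terms of $\delta(I)=|S\setminus I|=|V|$, the rank from the top.

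The key step is an induction on $\delta$ establishing the type of every $X_I$ as a function of $\delta(I)$ and $\pty(S)$ only. The base case $\delta=0$ is $X_S$ with type $(\pty(S),0,0)$. For the inductive step, a class $X_I$ with $\delta(I)=d\ge 1$ has as options exactly the $d$ classes $X_J$ with $J\supsetneq I$, $\delta(J)=d-1$, all of which (by induction) share a single type depending only on $d-1$; feeding that common option-type through the two $\mex$ formulas in the definition of $\type$ gives the type of $X_I$. Because all options have the same type, these $\mex$ computations are tiny: $\nim_{\pty(I)}(X_I)=\mex(\{c\})$ and $\nim_{1-\pty(I)}(X_I)=\mex(\{c'\}\cup\{\nim_{\pty(I)}(X_I)\})$ for the two component values $c,c'$ of the option type, and one checks these stabilize. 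I expect the types to cycle among a very short list (the $|W|=1$ case already shows $(\pty,1,2)$-type behavior near the top, and for $|W|\ge 2$ the extra options collapse things to nim values in $\{0,1,2\}$); once the pattern is identified, the induction is a one-line verification at each parity. Finally, $\Phi=S\setminus W$ has $\delta(\Phi)=|W|$, and $\nim(\GEN(S,W))=\nim_0(X_\Phi)$ is read off from the formula, giving $0$ when $\pty(S)=0$ and $1$ when $\pty(S)=1$.

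The main obstacle is bookkeeping the parity interaction correctly: $\pty(I)=\pty(S)+\delta(I)\bmod 2$, so "$\pty(I)$" in the recursive $\type$ formulas flips with each step of the induction, and one must be careful which component ($\nim_0$ vs $\nim_1$) is being defined by which $\mex$. The cleanest route is to split into the two cases $\pty(S)=0$ and $\pty(S)=1$ at the outset and, within each, prove by induction on $d=\delta(I)$ a closed formula for $(\nim_0(X_I),\nim_1(X_I))$; the $|W|\ge 2$ hypothesis is used only to guarantee $\delta(\Phi)\ge 2$, i.e.\ that we are far enough down the cube that the top-of-cube transients have washed out. A minor point to handle: some small classes may satisfy $X_I=\{I\}$ with no position of the opposite parity, but the $\type$ definition already accounts for this via the recursive $\nim_{1-\pty(I)}$ clause, so no separate argument is needed. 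One should also double-check that the orbit-quotient diagrams in Figure~\ref{fig:diagGenSW}(b),(d) agree with the computed types as a sanity check on the $|W|=4$ instance.
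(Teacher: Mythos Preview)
Your proposal is correct and follows essentially the same approach as the paper: identify the structure digraph via Corollary~\ref{cor:intersectSub}, split on $\pty(S)$, prove by induction on $\delta(I)$ a closed formula for $\type(X_I)$ depending only on $\delta(I)$, and then read off $\nim_0(X_\Phi)$ using $\delta(\Phi)=|W|\ge 2$. One small slip to clean up: in your first paragraph the arrow direction is reversed---adding a point \emph{decreases} $\delta$, so $X_{S\setminus V}\in\Opt(X_{S\setminus V'})$ requires $V\subsetneq V'$ (and $X_S$ corresponds to $V=\emptyset$, not $V=W$)---though you have this right in the inductive paragraph.
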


\begin{proof}
First consider the case when $|S|$ is even. We are going to use type
calculus and structural induction on the elements of $\mathcal{I}$
to show that
\[
\type(X_{I})=\begin{cases}
(0,0,0), & \delta(I)=0\\
(1,2,1), & \delta(I)=1\\
(0,0,1), & \delta(I)>1\text{ and }\delta(I)\equiv_{2}0\\
(1,0,1), & \delta(I)>1\text{ and }\delta(I)\equiv_{2}1
\end{cases}
\]
for all $I\in\mathcal{I}$, as shown in Figure~\ref{fig:diagGenSW}(b).
Then we will have
\[
\nim(\GEN(S,W))=\nim(\emptyset)=\nim_{0}(X_{\Phi})=0
\]
because $\delta(\Phi)=|W|\geq2$. If $\delta(I)=0$ then $X_{I}=X_{S}$
and $\type(X_{S})=(0,0,0)$. If $\delta(I)=1$ then $I=S\setminus\{w\}$
for some $w\in W$, so $\pty(I)=1-\pty(S)=1$. Then $\nim(I)=\mex(\nim(\Opt(I)))=\mex(\nim(S))=\mex\{0\}=1$.
Furthermore, for any $P\in X_{I}$ satisfying $\pty(P)=0$, we have
$\nim(P)=\mex(\nim(\Opt(P)))=\mex\{0,1\}=2$. Hence $\type(X_{I})=(1,2,1)$.

Suppose $\delta(I)\ge2$. We have two cases to consider. If $\pty(I)=0$
then $\delta(I)=|S\setminus I|\equiv_{2}0$, and so $\delta(J)=|S\setminus J|\equiv_{2}1$
with $\type(J)\in\{(1,2,1),(1,0,1)\}$ for all $J\in\Opt(I)$ by induction.
Hence
\begin{align*}
\type(X_{I}) & =(0,\nim_{0}(X_{I}),\nim_{1}(X_{I}))\\
 & =(0,\mex(\nim_{1}(\Opt(I))),\mex(\nim_{0}(\Opt(I)))\cup\{\nim_{0}(X_{I})\})\\
 & =(0,\mex(\{1\}),\mex(A))=(0,0,1),
\end{align*}
where $A=\{0,2\}$ or $A=\{0\}$.

If $\pty(I)=1$ then $\delta(I)\equiv_{2}1$, and so $\delta(J)=|S\setminus J|\equiv_{2}0$
with $\type(J)=(0,0,1)$ for all $J\in\Opt(I)$ by induction. Hence
\begin{align*}
\type(X_{I}) & =(1,\nim_{0}(X_{I}),\nim_{1}(X_{I}))\\
 & =(1,\mex(\nim_{1}(\Opt(I))\cup\{\nim_{1}(X_{I})\}),\mex(\nim_{0}(\Opt(I))))\\
 & =(1,\mex(\{1\}),\mex(\{0\}))=(1,0,1).
\end{align*}

Now consider the case when $|S|$ is odd. One can show that
\[
\type(X_{I})=\begin{cases}
(1,0,0), & \delta(I)=0\\
(0,1,2), & \delta(I)=1\\
(1,1,0), & \delta(I)>1\text{ and }\delta(I)\equiv_{2}1\\
(0,1,0), & \delta(I)>1\text{ and }\delta(I)\equiv_{2}0
\end{cases}
\]
for all $I\in\mathcal{I}$, as show in Figure~\ref{fig:diagGenSW}(d).
The argument is essentially the same as the one we used in the previous
case but with reversed parities. Hence
\[
\nim(\GEN(S,W))=\nim(\emptyset)=\nim_{0}(X_{\Phi})=1
\]
because $\delta(\Phi)=|W|\geq2$. 
\end{proof}
The essence of the previous proof is captured in Figures~\ref{fig:diagGenSW}(b)
and (d). The parity of $|S|$ determines the direction of the triangles,
while $|W|$ determines the height of the diagram. 

\section{Convex geometries from vertices of trees\label{sec:treeCharacterization}}

Consider a tree graph $T$ with vertex set $S$. The vertex sets of
connected subgraphs of $T$ form a convex geometry on $S$. We call
this the \emph{vertex geometry} of $T$. In this section we study
the achievement game $\GEN(S,W)$ on vertex geometries of trees.

We use the standard $N(v)$ notation for the set of vertices adjacent
to a vertex $v$.
\begin{example}
The vertex geometry of the path graph $P_{3}$ with vertex set $S=\{1,2,3\}$
is $\mathcal{K}=2^{S}\setminus\{\{1,3\}\}$. 
\end{example}

\begin{figure}
\begin{centering}
\begin{tabular}{cc}
\begin{tabular}{c}
\includegraphics{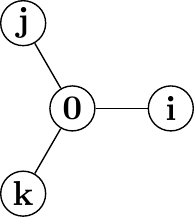}\tabularnewline
\end{tabular} & %
\begin{tabular}{c}
\includegraphics[scale=1.3]{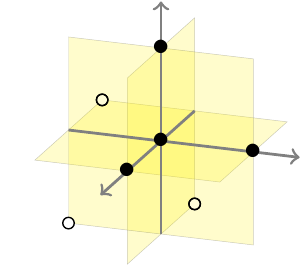}\tabularnewline
\end{tabular}\tabularnewline
\end{tabular}
\par\end{centering}
\caption{\label{fig:K1,3}The complete bipartite graph $K_{1,3}$ and the vertex
geometry of $K_{1,3}$ represented as a deletion of an affine convex
geometry.}
\end{figure}

\begin{example}
Let $R:=\{{\bf 0,i,j,k}\}$ and $D:=\{{\bf -i-j},{\bf -i-k},{\bf -j-k}\}$
be subsets of $\mathbb{R}^{3}$. The vertex geometry of the complete
bipartite graph $K_{1,3}$ can be represented as a deletion of the
affine convex geometry on $S=R\cup D$ by $D$. The graph $K_{1,3}$
and the point set $S$ are shown in Figure~\ref{fig:K1,3}. The set
of convex sets is $\mathcal{K}=2^{R}\setminus\{\{{\bf i},{\bf j}\},\{{\bf j},{\bf k}\},\{{\bf i},{\bf k}\},\{\mathbf{i},\mathbf{j},\mathbf{k}\}\}$.
\end{example}

Removing a vertex $w$ of a tree $T$ creates a forest that we denote
by $T\setminus w$. 
\begin{prop}
Let $(S,\mathcal{K})$ be the vertex geometry of a tree $T$ and $W\subseteq S$.
Then $w\in\Ex(W)$ if and only if the elements of $W\setminus\{w\}$
are vertices in a single connected component of $T\setminus w$.
\end{prop}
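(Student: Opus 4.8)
The statement characterizes extreme points of a subset $W$ of the vertex geometry of a tree $T$. Recall that in this geometry the convex closure $\tau(A)$ of a vertex set $A$ is the vertex set of the smallest connected subgraph containing $A$, which for a tree is just the union of all the unique tree-paths between pairs of vertices of $A$ — equivalently, the union of $A$ with every vertex lying on a path between two vertices of $A$. The plan is to unwind the definition of $w \in \Ex(W)$, namely $w \notin \tau(W \setminus \{w\})$, and translate each direction into a connectivity statement about $T \setminus w$.

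First I would establish the forward direction by contraposition: suppose the vertices of $W \setminus \{w\}$ do not all lie in a single connected component of $T \setminus w$. Then there exist $u, u' \in W \setminus \{w\}$ lying in distinct components of $T \setminus w$. Since $T$ is a tree, the unique path from $u$ to $u'$ in $T$ must pass through $w$ (otherwise $u$ and $u'$ would be connected in $T \setminus w$). Hence $w$ lies on a path between two elements of $W \setminus \{w\}$, so $w \in \tau(W \setminus \{w\})$, i.e. $w \notin \Ex(W)$. For the reverse direction, suppose all elements of $W \setminus \{w\}$ lie in a single component $C$ of $T \setminus w$. I claim $\tau(W \setminus \{w\}) \subseteq C$: indeed $C$ is the vertex set of a connected subgraph of $T$ (a subtree) containing $W \setminus \{w\}$, hence it is a convex set containing $W \setminus \{w\}$, so $\tau(W\setminus\{w\})$, being the intersection of all such convex sets, is contained in $C$. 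Since $w \notin C$, we get $w \notin \tau(W \setminus \{w\})$, so $w \in \Ex(W)$.

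The only point requiring a little care is the description of $\tau$ on a vertex geometry of a tree and the fact used above that each connected component of $T \setminus w$ is itself convex in $(S,\mathcal{K})$ — this is immediate since the components of a forest are connected subgraphs, and the vertex sets of connected subgraphs are exactly the convex sets by definition of the vertex geometry. With that in hand both directions are short. I expect no real obstacle here; the main thing is to phrase the argument so that it uses only the definition of the vertex geometry and the fact that in a tree the path between two vertices is unique, rather than invoking the general closure-operator formalism. If desired, one can also note that the forward direction can be phrased directly: $w \in \tau(W\setminus\{w\})$ means $w$ lies in every connected subgraph containing $W \setminus \{w\}$, and the minimal such subgraph is obtained by unioning the tree-paths among the elements of $W \setminus \{w\}$, which contains $w$ precisely when some such path runs through $w$, which happens precisely when $W \setminus \{w\}$ meets at least two components of $T \setminus w$.
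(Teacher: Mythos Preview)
Your proof is correct and follows essentially the same approach as the paper's own proof: both argue the contrapositive of the forward direction by picking two elements of $W\setminus\{w\}$ in different components of $T\setminus w$ and observing that the path between them passes through $w$, and both prove the reverse direction by noting that a single component $C$ of $T\setminus w$ is convex, so $\tau(W\setminus\{w\})\subseteq C$ and $w\notin C$. The only difference is presentational: you spell out the description of $\tau$ as the union of tree-paths and explicitly justify why components are convex, which the paper leaves implicit.
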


\begin{proof}
For each $v\in N(w)$ let $V_{v}$ be the vertex set of the connected
component of $T\setminus w$ containing $v$. The map $v\mapsto V_{v}$
is a bijection from $N(w)$ to the collection of vertex sets of the
connected components of $T\setminus w$. 

First assume $w_{1}$ and $w_{2}$ are elements of $W\setminus\{w\}$
that are vertices in different connected components of $T\setminus w$.
Then $w$ is a vertex of every connected subgraph of $T$ that contains
both $w_{1}$ and $w_{2}$. Hence $w\in\tau(\{w_{1},w_{2}\})\subseteq\tau(W\setminus\{w\})$,
so $w\not\in\Ex(W)$. 

Now assume $W\setminus\{w\}$ is contained in the vertex set of a
single connected component $V_{v}$ of $T\setminus w$. Since $V_{v}$
is convex and contains $W\setminus\{w\}$, $\tau(W\setminus\{w\})\subseteq V_{v}$.
So $w\not\in\tau(W\setminus\{w\})$ since $w\not\in V_{v}$. Thus
$w\in\Ex(W)$.
\end{proof}

\subsection{Winning subsets with more than one vertex}

First we consider the case when $W$ has at least two vertices. If
$w\in\Ex(W)$ then $T\setminus w$ has exactly one component that
contains some elements of $W$. We denote the vertex set of this component
by $M_{w}$. It is clear from the construction that $w\mapsto M_{w}:\Ex(W)\to\mathcal{M}$
is a bijection. We use the notation $V_{w}:=S\setminus M_{w}$. Note
that $\{V_{w}\mid w\in\Ex(W)\}$ is a collection of pairwise disjoint
sets.

There is a bijective correspondence between the subsets of $\Ex(W)$
and the intersection subsets given by $A\mapsto M_{A}:2^{\Ex(W)}\to\mathcal{I}$,
where 
\[
M_{A}:=\bigcap\{M_{w}\mid w\in A\}=S\setminus\bigcup\{V_{w}\mid w\in A\}.
\]
Note that $S=M_{\emptyset}$ and $\Phi=M_{\Ex(W)}$. We define the
\emph{deficiency} of a structure class $M_{A}$ by $\delta(M_{A}):=|A|$.
We also define the \emph{signature} of a structure class $X_{M_{A}}$
to be $\sigma(X_{M_{A}}):=(e,o)$, where $e$ is the size of $\{a\in A\mid\pty(V_{a})=0\}$
and $o$ is the size of $\{a\in A\mid\pty(V_{a})=1\}$. The signature
satisfies $\delta(M_{A})=e+o$. The \emph{signature of the game} is
$\sigma(X_{\Phi})$.
\begin{example}
\label{exa:tree}
\begin{figure}
\begin{tabular}{ccc}
\raisebox{1cm}{\includegraphics{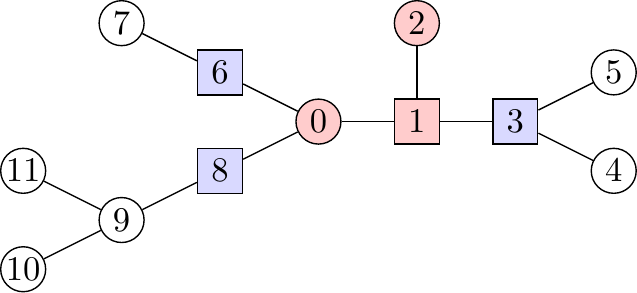}} & \begin{tikzpicture}[yscale=.71]
\node (M) at (0,0) {$M_\emptyset$};
\node (M6) at (-1.5,2) {$M_{\{3\}}$};
\node (M3) at (0,2) {$M_{\{8\}}$};
\node (M8) at (1.5,2) {$M_{\{6\}}$};
\node (M36) at (-1.5,4) {$M_{\{3,8\}}$};
\node (M68) at (0,4) {$M_{\{3,6\}}$};
\node (M38) at (1.5,4) {$M_{\{6,8\}}$};
\node (M368) at (0,6) {$M_{\{3,6,8\}}$};
\draw (M) -- (M6);
\draw (M) -- (M3);
\draw (M) -- (M8);
\draw (M36) -- (M6);
\draw (M36) -- (M3);
\draw (M38) -- (M3);
\draw (M38) -- (M8);
\draw (M68) -- (M6);
\draw (M68) -- (M8);
\draw (M368) -- (M36);
\draw (M368) -- (M68);
\draw (M368) -- (M38);
\end{tikzpicture} & \includegraphics[scale=0.5]{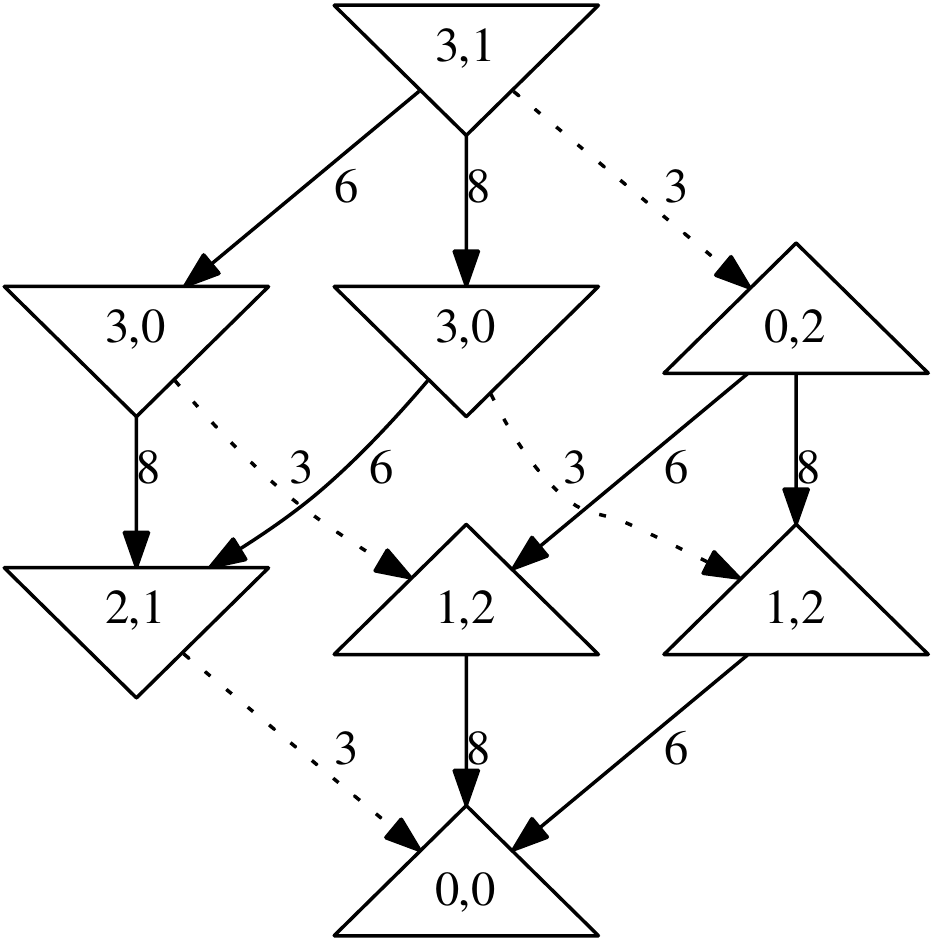}\tabularnewline
\end{tabular}

\caption{\label{fig:tree}Tree graph, lattice of intersection subsets, and
structure diagram of $\protect\GEN(S,W)$ with $W=\{1,3,6,8\}$. Dotted
arrows correspond to a signature change of $(0,-1)$, while solid
arrows correspond to a signature change of $(-1,0)$. The label $a$
on an arrow indicates the change from $M_{A}$ to $M_{A\setminus\{a\}}$.}
\end{figure}
Figure~\ref{fig:tree} shows a tree graph with vertex set $S=\{0,\ldots,11\}$,
the lattice of intersection subsets, and the structure diagram of
$\GEN(S,W)$ with winning set $W=\{1,3,6,8\}$. The set of extreme
points of $W$ is $\Ex(W)=\{3,6,8\}$. The set of maximal non-generating
sets is $\mathcal{M}=\{M_{3},M_{6},M_{8}\}$ with $V_{3}=\{3,4,5\}$,
$V_{6}=\{6,7\}$, and $V_{8}=\{8,9,10,11\}$, so that the Frattini
subset is $\Phi=M_{\{3,6,8\}}=\{0,1,2\}$. Any directed path in the
structure diagram from $X_{M_{A}}$ to $X_{S}=X_{M_{\emptyset}}$
corresponds to the elements of $A$. The signature of $X_{M_{A}}$
can be computed by counting the solid and the dotted arrows along
any such directed path. For example $\sigma(X_{M_{\{3,8\}}})=(1,1)$.
Any directed path from $X_{\Phi}$ to $X_{S}$ contains one dotted
and two solid arrows since the signature of the game is $\sigma(X_{\Phi})=(2,1)$.
\end{example}

\begin{prop}
\label{prop:vertexMore}Let $(S,\mathcal{K})$ be the vertex geometry
of a tree $T$ and $W$ be a subset of $S$ with $|W|\ge2$. If the
signature of the game is $(e,o)$, then
\[
\nim(\GEN(S,W))=\begin{cases}
1, & (e,o)=(1,0)\\
2, & (e,o)\in\{(0,1),(1,2)\}\\
3, & (e,o)\in\{(1,1),(2,1)\}\\
0, & \text{otherwise}
\end{cases}
\]
when $\pty(S)=0$, and
\[
\nim(\GEN(S,W))=\begin{cases}
0, & (e,o)\in\{(0,0),(1,1)\}\\
2, & (e,o)\in\{(1,0),(2,0)\}\\
1, & \text{otherwise}
\end{cases}
\]
when $\pty(S)=1$.
\end{prop}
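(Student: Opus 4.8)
The plan is to run the type calculus of Section~\ref{sec:structureTheory} on the structure digraph of $\GEN(S,W)$, which in this case is just a Boolean lattice. By the discussion preceding the statement, $\mathcal{I}=\{M_A\mid A\subseteq\Ex(W)\}$ with $A\mapsto M_A$ a bijection, the sets $V_w=S\setminus M_w$ ($w\in\Ex(W)$) are pairwise disjoint and nonempty, and $M_A=S\setminus\bigcup_{w\in A}V_w$. For any $s\in S\setminus M_A$ there is a unique $a\in A$ with $s\in V_a$, and disjointness gives $\mathcal{M}_{M_A\cup\{s\}}=\{M_b\mid b\in A\setminus\{a\}\}$, so $\lceil M_A\cup\{s\}\rceil=M_{A\setminus\{a\}}$; since each $V_a$ is nonempty this yields $\Opt(X_{M_A})=\{X_{M_{A\setminus\{a\}}}\mid a\in A\}$. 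Hence the structure digraph is the Boolean lattice on $\Ex(W)$, the arrow to $X_{M_{A\setminus\{a\}}}$ changes the signature by $(-1,0)$ when $\pty(V_a)=0$ and by $(0,-1)$ when $\pty(V_a)=1$, and $\pty(M_A)\equiv\pty(S)+o\pmod 2$ when $\sigma(X_{M_A})=(e,o)$ (using $|S|=|M_A|+\sum_{a\in A}|V_a|$). In particular the options of a class of signature $(e,o)$ have signatures $(e-1,o)$ and $(e,o-1)$, so an easy induction on $|A|$ shows that $\type(X_{M_A})$ depends only on $\pty(S)$ and $\sigma(X_{M_A})$.

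Next I would prove the explicit formula. For $\pty(S)=0$,
\[
\type(X_{M_A})=\begin{cases}
(0,0,0),&(e,o)=(0,0)\\
(0,1,2),&(e,o)=(1,0)\\
(1,2,1),&(e,o)=(0,1)\\
(0,0,2),&(e,o)=(2,0)\\
(1,3,0),&(e,o)=(1,1)\\
(1,3,1),&(e,o)=(2,1)\\
(0,2,1),&(e,o)=(1,2)\\
(o\bmod 2,\,0,\,1),&\text{otherwise,}
\end{cases}
\]
where $\sigma(X_{M_A})=(e,o)$, and for $\pty(S)=1$ the mirror formula with each entry $(\epsilon,n_0,n_1)$ replaced by $(1-\epsilon,n_1,n_0)$. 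This is an induction on $|A|$ via type calculus. The base case is $X_S$, of signature $(0,0)$ and type $(\pty(S),0,0)$. For the seven exceptional signatures and the five non-exceptional "boundary" signatures $(0,2),(2,2),(1,3),(3,0),(3,1)$ — those whose signature lies in the generic branch but one of whose two options is exceptional — I would carry out the $\mex$ computations directly from the (by then known) types of their options. For every remaining signature both options lie in the generic branch, and a single uniform $\mex$ computation (with a trivial separate check when $e=0$ or $o=0$) reproduces $(o\bmod 2,0,1)$; this self-consistency is exactly what makes the "otherwise" branch closed under the recursion, and it is what forces the exceptional set to be precisely those seven signatures. The case $\pty(S)=1$ then follows from the purely formal observation that reversing the parity of every class of a structure digraph sends each type $(\epsilon,n_0,n_1)$ to $(1-\epsilon,n_1,n_0)$ — a one-line induction from the two defining equations of type calculus.

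Finally, since $\emptyset\in X_\Phi$ has parity $0$, we have $\nim(\GEN(S,W))=\nim_0(X_\Phi)$; the signature $\sigma(X_\Phi)=(e,o)$ is the signature of the game, and $e+o=|\Ex(W)|\ge 2$ because $W\subseteq\tau(\Ex(W))$ while the closure of a set of at most one vertex of a tree has at most one element. Reading the second coordinate of $\type(X_\Phi)$ off the table above in each parity case gives exactly the claimed values. The main obstacle is entirely organizational: pinning down the correct seven "exceptional" signatures so that the generic branch of the table really is invariant under the type-calculus recursion, and then keeping the parities straight while checking the handful of exceptional, boundary, and mirror computations.
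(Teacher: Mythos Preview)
Your proposal is correct and follows essentially the same route as the paper: identify the options of $X_{M_A}$ as the $X_{M_{A\setminus\{a\}}}$, then run type calculus by induction on $|A|$ until the set of types stabilizes (the paper does this pictorially in Figure~\ref{fig:structInd}, you do it via an explicit type table with seven exceptional signatures and a self-consistent generic branch). Your parity-mirror observation for the $\pty(S)=1$ case is a clean way to avoid repeating the computation, and your remark that $e+o=|\Ex(W)|\ge 2$ correctly renders the $(0,0),(1,0),(0,1)$ cases of the statement vacuous.
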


\begin{figure}
\begin{tabular}{cc}
\includegraphics[scale=0.5]{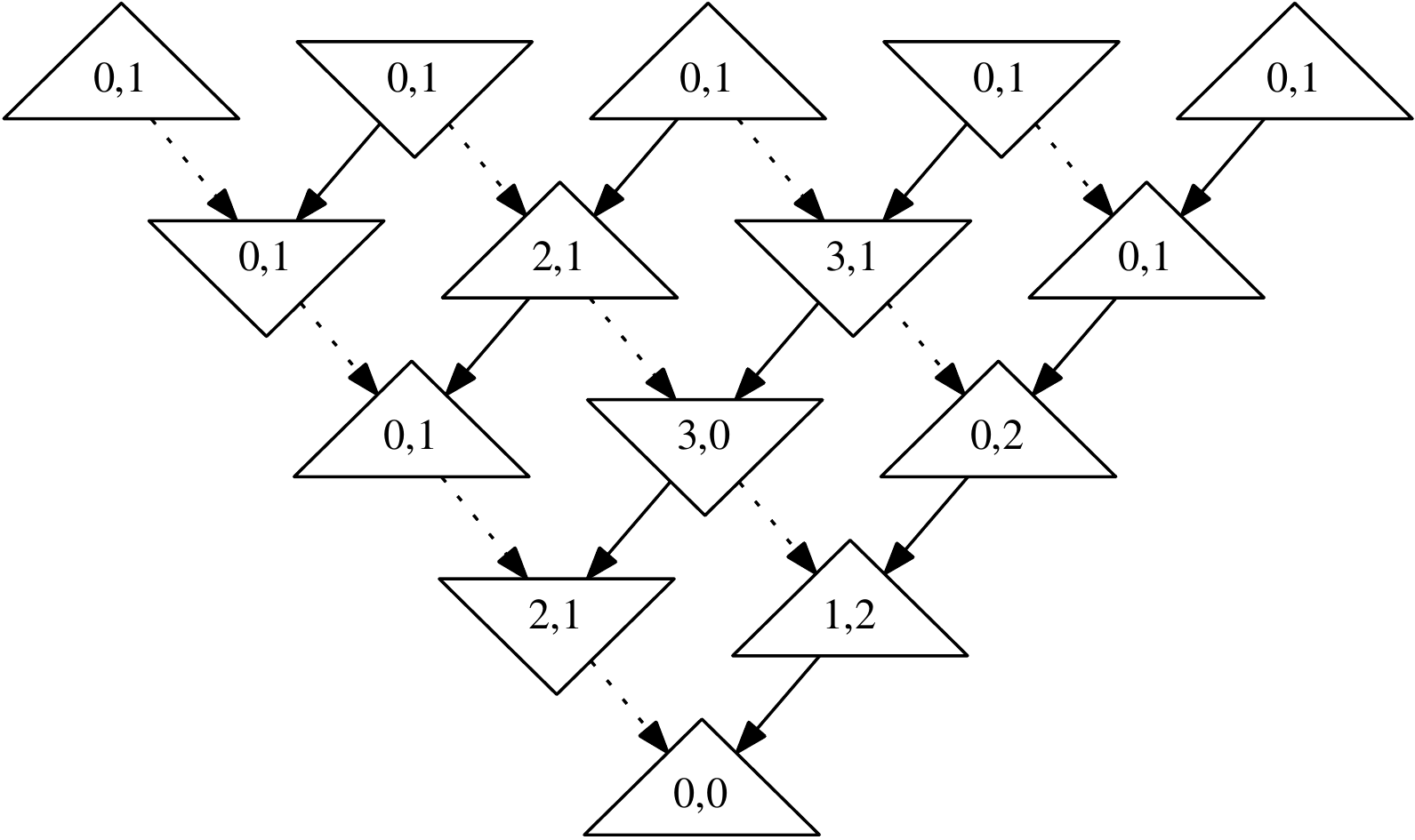} & \includegraphics[scale=0.5]{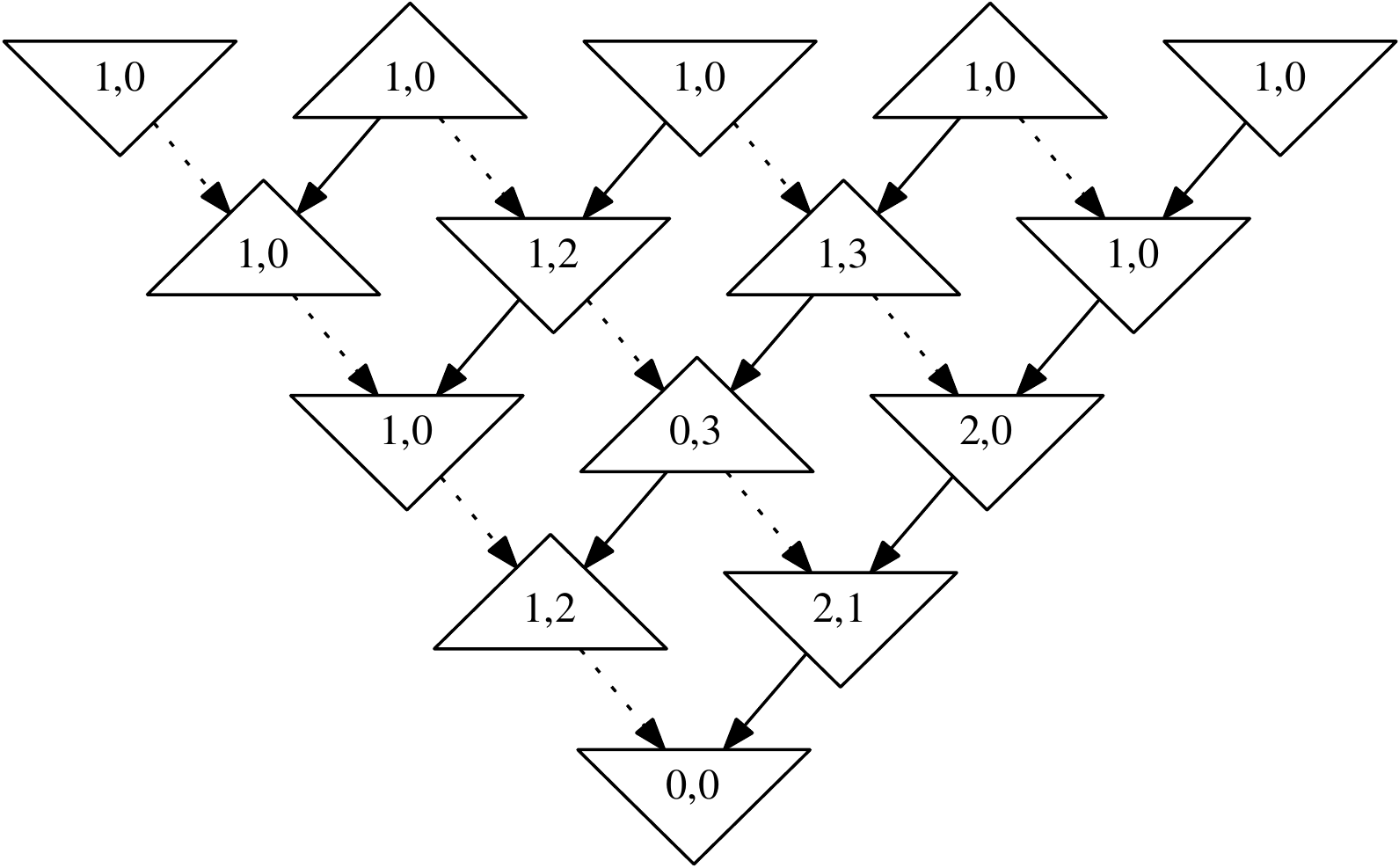}\tabularnewline
\end{tabular}

\caption{\label{fig:structInd}Type calculus computation of the possible structure
class types with deficiency less than 5. A solid arrow represents
no change in parity while a dotted arrow represents a parity change.}
\end{figure}

\begin{proof}
Let $X_{J}$ be an option of $X_{I}$ and $I=M_{A}$. Then $I\cup\{v\}\in X_{J}$
for some $v\in J\setminus I$. So there is a unique $a\in A$ such
that $v\in V_{a}$ and $J=M_{A\setminus\{a\}}$. So every arrow of
the structure diagram corresponds to a unique element $a$ of $\Ex(W)$.
This means $\delta(J)=|A\setminus\{a\}|=|A|-1=\delta(I)-1$. Roughly
speaking this tells us that there are no arrows between structure
classes that are at the same directed distance from $X_{S}$. 

Assume $I=M_{A}$ and $\sigma(X_{I})=(e,o)$. If $e,o\ge1$ then there
are $a,b\in A$ such that $\pty(V_{a})=0$ and $\pty(V_{b})=1$. Hence
\[
\sigma(\Opt(X_{I}))=\begin{cases}
\{(e-1,0)\}, & o=0\\
\{(0,o-1)\}, & e=0\\
\{(e-1,o),(e,o-1)\}, & e,o\ge1.
\end{cases}
\]
The result now follows from type calculus and structural induction
on the structure classes. The details of the type calculus computation
are shown in Figure~\ref{fig:structInd}. The signature of a structure
class is $(e,o)$ where $e$ is the number of solid arrows and $o$
is the number of dotted arrows from the structure class to the bottom
structure class along a directed path. Starting at deficiency $4$
the set of possible types remains fixed by induction.
\end{proof}

\subsection{Winning subsets with only one vertex}

Now we consider the case when $W=\{w\}$ is a singleton set. For each
$v$ in the set $N(w)$ of neighbors of $w$, there is a unique connected
component of $T\setminus w$ that contains $v$. The vertex set $M_{v}$
of this component is a maximal non-generating set. In fact, $\mathcal{M}=\{M_{v}\mid v\in N(w)\}$.
Note that $\mathcal{M}$ is a collection of disjoint subsets. The
\emph{signature} of the game is $(e,o)$, where $e$ is the size of
$\{v\in N(w)\mid\pty(M_{v})=0\}$ and $o$ is the size of $\{v\in N(w)\mid\pty(M_{v})=1\}$. 
\begin{example}
\begin{figure}
\begin{tabular}{ccc}
\raisebox{.3cm}{\includegraphics{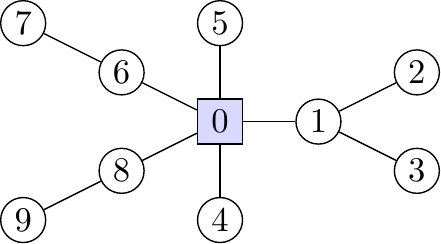}} & ~ & \includegraphics[scale=0.5]{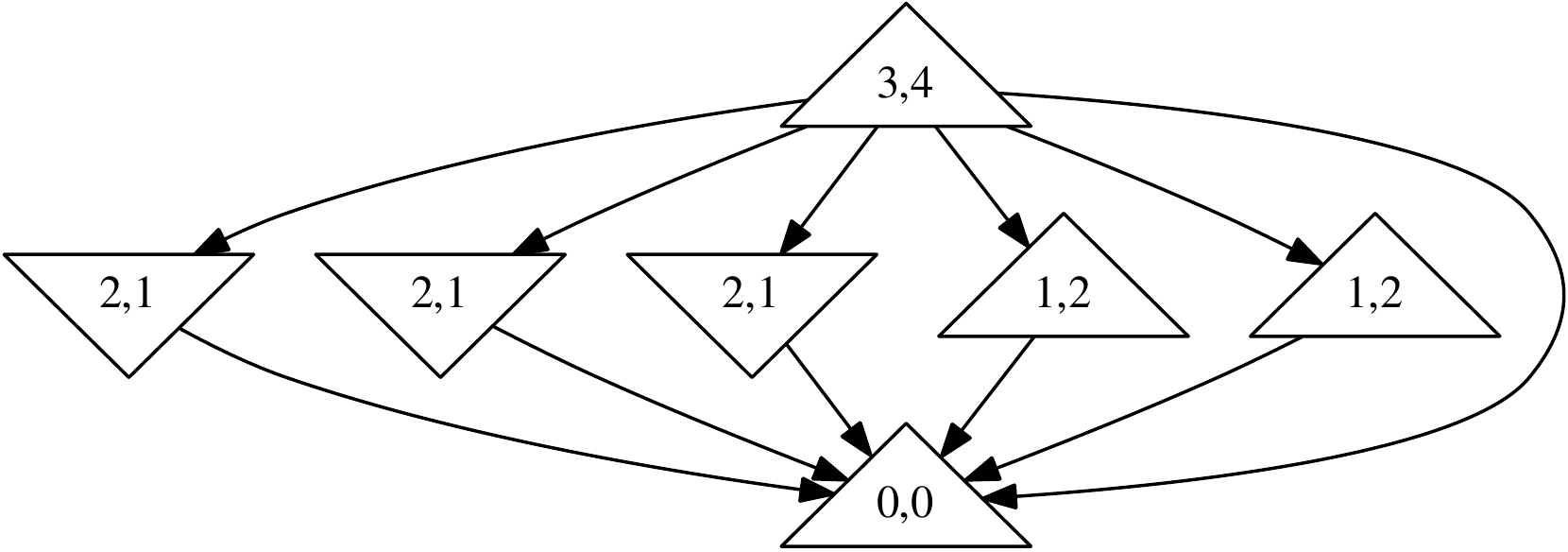}\tabularnewline
\end{tabular}

\caption{\label{fig:tree2}Tree graph and structure diagram of $\protect\GEN(S,W)$
with $W=\{0\}$.}
\end{figure}
Figure~\ref{fig:tree2} shows a tree graph with vertex set $S=\{0,\ldots,9\}$
and the structure diagram of $\GEN(S,W)$ with winning set $W=\{0\}$.
Some of the maximal non-generating sets are $M_{1}=\{1,2,3\}$ and
$M_{5}=\{5\}$, so that the Frattini subset is $\Phi=\emptyset$.
The signature of the game is $(e,o)=(2,3)$.
\end{example}

\begin{figure}
\setlength{\tabcolsep}{10pt}

\begin{tabular}{ccccc}
\includegraphics[scale=0.5]{D1e} & \includegraphics[scale=0.5]{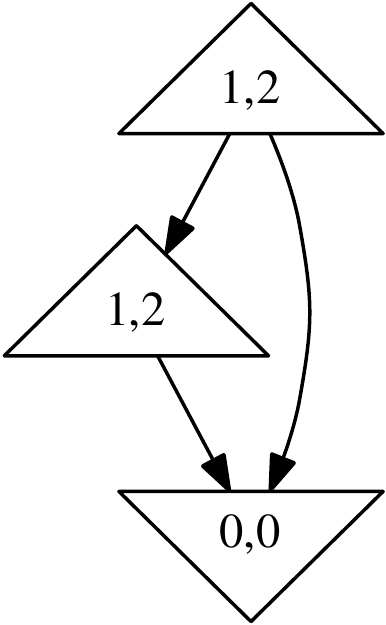} & \includegraphics[scale=0.5]{D1o} & \includegraphics[scale=0.5]{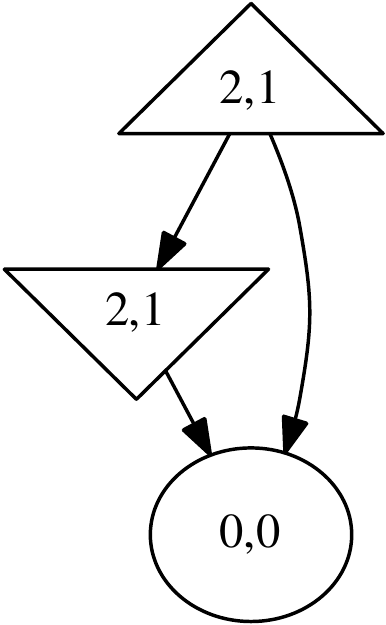} & \includegraphics[scale=0.5]{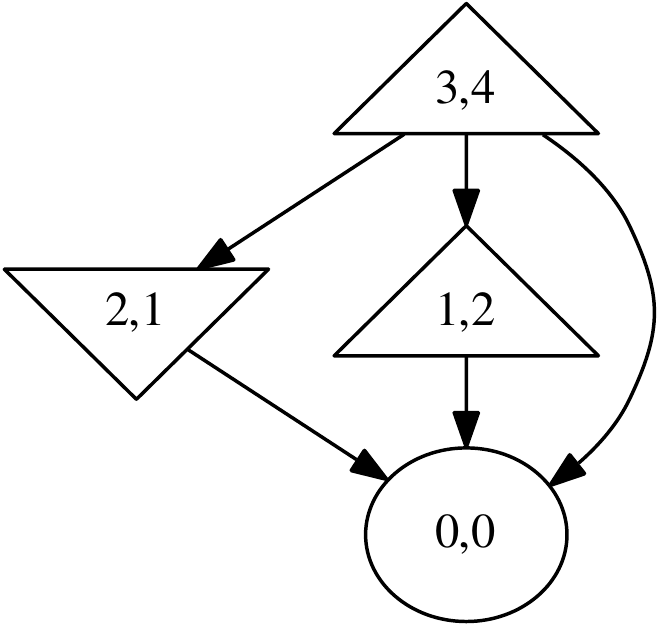}\tabularnewline
$e\le1$ & $e\ge2$ & $e=0$ & $e=0$ & $e\ge1$\tabularnewline
$o=0$ & $o=0$ & $o=1$ & $o\ge2$ & $o\ge1$\tabularnewline
(a) & (b) & (c) & (d) & (e)\tabularnewline
 &  &  &  & \tabularnewline
\end{tabular}

\caption{\label{fig:TreeV1}Orbit quotient structure diagrams for $\protect\GEN(S,\{w\})$
with vertex geometry. The oval shape in subfigures (d) and (e) indicates
that the parity of the structure class could be either $0$ or $1$.}
\end{figure}

\begin{prop}
\label{prop:vertexOne}Let $(S,\mathcal{K})$ be the vertex geometry
of a tree $T$ and $W=\{w\}\subseteq S$. If the signature of the
game is $(e,o)$, then 
\[
\nim(\GEN(S,W))=\begin{cases}
1, & o=0\\
2, & e=0\text{ and }o\ge1\\
3, & e,o\ge1.
\end{cases}
\]
\end{prop}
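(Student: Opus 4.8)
The strategy mirrors the proof of Proposition~\ref{prop:vertexMore}: set up the structure digraph, identify its arrows, and run type calculus with structural induction on the structure classes ordered by deficiency. The key structural observation is that for $W=\{w\}$ the maximally non-generating sets $\mathcal{M}=\{M_v\mid v\in N(w)\}$ are pairwise disjoint and their union is $S\setminus\{w\}$. Consequently the intersection subsets are exactly the sets $M_A:=\bigcap_{v\in A}M_v$ for $A\subseteq N(w)$, with $M_\emptyset=S$ and $M_{N(w)}=\Phi=\emptyset$ (note $\emptyset$ is a genuine game position here, unlike in the $|W|\ge2$ case where $\Phi$ need not be empty). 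Writing $\sigma(X_{M_A})=(e_A,o_A)$ where $e_A=|\{v\in A\mid\pty(M_v)=0\}|$ and $o_A=|\{v\in A\mid\pty(M_v)=1\}|$, the same counting as before shows each arrow out of $X_{M_A}$ goes to $X_{M_{A\setminus\{v\}}}$, lowering exactly one of the two signature coordinates by $1$, and that $\pty(M_A)\equiv_2 e_A$ (since $|S\setminus M_A|=\sum_{v\in A}|M_v|$, so its parity is $o_A\bmod 2$, hence $\pty(M_A)=\pty(S)-o_A$; one must be slightly careful here about which parity the triangle records, but the point is that the parity of a structure class is determined by its signature together with $\pty(S)$).

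\textbf{Key steps.} First I would record the bijection $A\mapsto M_A$ and the arrow description above, noting in particular that $X_\emptyset=X_\Phi$ is the unique sink-adjacent class reached only by paths whose arrow labels exhaust $N(w)$, so $\sigma(X_\Phi)=(e,o)$ is the signature of the game and $\nim(\GEN(S,\{w\}))=\nim_{\pty(\emptyset)}(X_\Phi)=\nim_0(X_\Phi)$. Second, I would observe that unlike the $|W|\ge2$ situation, here there is \emph{no} class $X_S$ playing the role of a fixed-parity terminal class with type forced to be $(0,0,0)$ — wait, actually $X_S=X_{M_\emptyset}$ \emph{is} still the unique class without options and still has type with nim-components $0,0$, and its parity is $\pty(S)$. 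So the base case of the induction is $\type(X_S)$ with parity $\pty(S)$ and nim-pair $(0,0)$. Third, I would run type calculus upward in deficiency: a class with signature $(e,o)$ has options with signatures $(e-1,o)$ and/or $(e,o-1)$ (whichever coordinates are positive), and I compute $\nim_0,\nim_1$ via the recursive formulas in the definition of $\type$. Because the parity of $X_{M_A}$ is $\pty(S)-o_A\pmod 2$ while for $X_{M_{A\setminus\{v\}}}$ with $v$ a solid ($\pty(M_v)=0$) neighbor the parity is unchanged and for a dotted neighbor it flips, the recursion stabilizes and one sees that the type depends on $(e,o)$ only through: whether $o=0$, whether $e=0$, and whether both are positive — with the parity coordinate washing out of the final answer $\nim_0(X_\Phi)$. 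This is exactly the trichotomy claimed, and it matches the five orbit-quotient diagrams in Figure~\ref{fig:TreeV1}: cases (a),(b) give $o=0\Rightarrow\nim=1$; cases (c),(d) give $e=0,o\ge1\Rightarrow\nim=2$; case (e) gives $e,o\ge1\Rightarrow\nim=3$.

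\textbf{Main obstacle.} The genuine subtlety, and the reason this is a separate proposition rather than a corollary, is that $\Phi=\emptyset$ is itself a legal game position (the empty set is not required to be convex, so $\{w\}$ need not lie in $\tau(\emptyset)$), which means the structure class $X_\Phi$ can contain positions of \emph{both} parities and its type genuinely needs the two-component analysis; one must verify the induction is correctly seeded and that the small-deficiency base cases (deficiency $0,1,2$, say) are computed by hand before the pattern stabilizes — this is the analog of the ``Starting at deficiency $4$ the set of possible types remains fixed'' remark in the previous proof, and here because there are at most two ``axes'' ($e$ and $o$) the stabilization happens quickly but the low cases $(e,o)\in\{(1,0),(0,1),(2,0),(1,1),(0,2),\dots\}$ must each be checked against Figure~\ref{fig:TreeV1}. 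The other place to be careful is confirming that $\pty(M_v)$ — equivalently the parity of the component size — is the only feature of each neighbor that matters, i.e.\ that two games with the same signature really do have isomorphic structure diagrams; this follows because the structure diagram is determined by the poset $2^{N(w)}$ together with the parity labels $\pty(M_v)$, and size-preserving automorphisms let us permute neighbors of equal parity, yielding the orbit quotient diagrams of Figure~\ref{fig:TreeV1}.
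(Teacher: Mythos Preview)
Your proposal has a genuine structural error that would make the induction fail. You correctly note that the maximally non-generating sets $M_v$ for $v\in N(w)$ are pairwise \emph{disjoint}, but then you claim $A\mapsto M_A:=\bigcap_{v\in A}M_v$ is a bijection from $2^{N(w)}$ to $\mathcal I$. It is not: precisely because the $M_v$ are disjoint, $M_A=\emptyset$ for every $A$ with $|A|\ge 2$. Hence $\mathcal I$ is not a Boolean lattice but a star, namely $\mathcal I=\{S\}\cup\{M_v\mid v\in N(w)\}\cup\{\emptyset\}$ (with the last term redundant when $|N(w)|\le 1$). Your signature $\sigma(X_{M_A})=(e_A,o_A)$ is therefore not well-defined on structure classes, and the recursion ``options have signature $(e-1,o)$ or $(e,o-1)$'' has no meaning here. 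This is the essential difference from Proposition~\ref{prop:vertexMore}: there the \emph{complements} $V_w=S\setminus M_w$ were disjoint, so the $M_w$ themselves overlapped heavily and the intersections really did give $2^{|\Ex(W)|}$ distinct sets.

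The paper's proof exploits the correct (much smaller) structure directly. From $\emptyset\in X_\Phi$ one sees $\{w\}\in X_S$ is an option, so there is an arrow $X_\Phi\to X_S$ skipping the middle layer entirely; the remaining options of $X_\Phi$ land in the various $X_{M_v}$, each of which has $X_S$ as its only option. This gives exactly the height-$2$ orbit quotient diagrams in Figure~\ref{fig:TreeV1}, and the nim number is read off by a short case analysis on $(e,o)$ rather than by an induction on deficiency. Your closing paragraph cites those diagrams, so you were evidently looking at the right picture, but the mechanism you describe (a $2^{N(w)}$-shaped induction stabilizing after small deficiency) does not match them and would not produce them.
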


\begin{proof}
Since $\emptyset\in X_{\Phi}$ and $\{w\}\in X_{S}$, $X_{S}\in\Opt(X_{\Phi})$. 

Assume $o=0$, so that $\pty(S)=1$. If $e=0$ then the nim number
of the game is $1$ as shown in Figure~\ref{fig:TreeV1}(a) since
$S=\{w\}$, $\mathcal{M}=\{\emptyset\}$, and $\mathcal{I}=\{\emptyset,S\}$.
If $e=1$ then $N(w)=\{v\}$, $\mathcal{M}=\{M_{v}\}$, and $\mathcal{I}=\{M_{v},S\}$.
Hence the structure diagram is shown in Figure~\ref{fig:TreeV1}(a)
and the nim number of the game is $1$. If $e\ge2$ then $\Phi=\emptyset$
and $\mathcal{I}=\{\emptyset,S\}\cup\{M_{v}\mid v\in N(w)\}$, so
the orbit quotient structure diagram is shown in Figure~\ref{fig:TreeV1}(b)
and the nim number of the game is $1$.

Assume $e=0$ and $o\ge1$. If $o=1$ then $N(w)=\{v\}$, $\mathcal{M}=\{M_{v}\}$,
and $\mathcal{I}=\{M_{v},S\}$, so the structure diagram is shown
in Figure~\ref{fig:TreeV1}(c) and the nim number of the game is
$2$. If $o\ge2$ then $\Phi=\emptyset$ and $\mathcal{I}=\{\emptyset,S\}\cup\{M_{v}\mid v\in N(w)\}$,
so the orbit quotient structure diagram is shown in Figure~\ref{fig:TreeV1}(d)
and the nim number of the game is $2$. 

Assume $e,o\ge1$, so that $\Phi=\emptyset$ and $\mathcal{I}=\{\emptyset,S\}\cup\{M_{v}\mid v\in N(w)\}$.
So the orbit quotient structure diagram is shown in Figure~\ref{fig:TreeV1}(e)
and the nim number of the game is $3$.
\end{proof}
\begin{cor}
If $(S,\mathcal{K})$ is the vertex geometry of a tree $T$, then
$\nim(\GEN(S,W))\in\{0,1,2,3\}$.
\end{cor}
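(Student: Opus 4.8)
The corollary asserts that $\nim(\GEN(S,W))\in\{0,1,2,3\}$ for any vertex geometry of a tree and any nonempty $W\subseteq S$. The plan is to split into the two cases already analyzed in the preceding propositions and simply read off the possible values.

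First I would observe that $W$ is nonempty, so either $|W|=1$ or $|W|\ge 2$. If $|W|=1$, then Proposition~\ref{prop:vertexOne} applies directly: the nim number is $1$, $2$, or $3$ depending on the signature $(e,o)$ of the game, so in particular it lies in $\{1,2,3\}\subseteq\{0,1,2,3\}$.

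If $|W|\ge 2$, then Proposition~\ref{prop:vertexMore} applies. Here I would just inspect the two case tables in that proposition. When $\pty(S)=0$, the listed values are $1$, $2$, $3$, and $0$ (the ``otherwise'' case), so $\nim(\GEN(S,W))\in\{0,1,2,3\}$. When $\pty(S)=1$, the listed values are $0$, $2$, and $1$ (the ``otherwise'' case), again all in $\{0,1,2,3\}$. Combining the two parity cases with the $|W|=1$ case completes the argument.

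There is essentially no obstacle here: the corollary is a trivial consequence of the explicit formulas in Propositions~\ref{prop:vertexMore} and~\ref{prop:vertexOne}, since every value appearing on the right-hand side of those piecewise definitions is one of $0,1,2,3$. The only thing to be careful about is making sure the two propositions together cover all nonempty $W$, which they do since they are split precisely according to whether $|W|=1$ or $|W|\ge 2$. Thus the proof is a one- or two-sentence appeal to those results.
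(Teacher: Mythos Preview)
Your proposal is correct and matches the paper's approach exactly: the corollary is stated in the paper without proof, as an immediate consequence of Propositions~\ref{prop:vertexMore} and~\ref{prop:vertexOne}, and your case split on $|W|=1$ versus $|W|\ge 2$ is precisely the intended reading.
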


\section{Affine convex geometries in $\mathbb{R}$\label{sec:affineCharacterization}}

In this section we study the games played on affine convex geometries
in $\mathbb{R}$. It is easy to see that if $S=\{s_{1},\ldots,s_{n}\}\subseteq\mathbb{R}$
then the affine convex geometry on $S$ is isomorphic to the vertex
geometry of a path graph $P_{n}$ with $n$ vertices.
\begin{prop}
Let $S=\{s_{1},\ldots,s_{n}\}\subseteq\mathbb{R}$ such that $s_{i}<s_{i+1}$
for all $i$ and $W=\{s_{i_{1}},\ldots,s_{i_{k}}\}$ such that $i_{1}<i_{2}<\cdots<i_{k}$
and $k\ge2$. Then
\[
\nim(\GEN(S,W))=\begin{cases}
0, & i_{1}\equiv_{2}i_{k}+1\\
1, & i_{1}\equiv_{2}1\equiv_{2}i_{k}\text{ and }n\equiv_{2}1\\
2, & i_{1}\equiv_{2}0\equiv_{2}i_{k}\text{ and }n\equiv_{2}1\\
3, & i_{1}\equiv_{2}i_{k}\text{ and }n\equiv_{2}0.
\end{cases}
\]
\end{prop}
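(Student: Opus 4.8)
The plan is to reduce everything to Proposition~\ref{prop:vertexMore}. By the observation preceding the statement, the affine convex geometry on $S=\{s_{1},\dots,s_{n}\}$ is isomorphic to the vertex geometry of the path graph $P_{n}$ whose vertices in path order are $s_{1},\dots,s_{n}$. So it suffices to identify $\Ex(W)$ in this path, describe the maximally non-generating sets and the associated sets $V_{w}$, read off the signature $(e,o)$ of the game, and substitute into Proposition~\ref{prop:vertexMore}.

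First I would determine $\Ex(W)$. Removing an interior vertex $s_{j}$ from the path splits it into the two components $\{s_{1},\dots,s_{j-1}\}$ and $\{s_{j+1},\dots,s_{n}\}$, while removing an endpoint leaves a single component. Since $i_{1}<\cdots<i_{k}$ with $k\ge2$, deleting $s_{i_{m}}$ for $1<m<k$ separates $s_{i_{1}}$ from $s_{i_{k}}$, whereas deleting $s_{i_{1}}$ (resp.\ $s_{i_{k}}$) leaves all remaining points of $W$ inside the single component $\{s_{i_{1}+1},\dots,s_{n}\}$ (resp.\ $\{s_{1},\dots,s_{i_{k}-1}\}$). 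Hence by the earlier characterization of extreme points in the vertex geometry of a tree, $\Ex(W)=\{s_{i_{1}},s_{i_{k}}\}$, so the two maximally non-generating sets are $M_{s_{i_{1}}}=\{s_{i_{1}+1},\dots,s_{n}\}$ and $M_{s_{i_{k}}}=\{s_{1},\dots,s_{i_{k}-1}\}$. Consequently $V_{s_{i_{1}}}=\{s_{1},\dots,s_{i_{1}}\}$ and $V_{s_{i_{k}}}=\{s_{i_{k}},\dots,s_{n}\}$, so $|V_{s_{i_{1}}}|=i_{1}$ and $|V_{s_{i_{k}}}|=n-i_{k}+1$; these formulas remain correct in the boundary cases $i_{1}=1$ and $i_{k}=n$.

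Now I would compute the signature $(e,o)$. Since $\pty(V_{s_{i_{1}}})\equiv i_{1}$ and $\pty(V_{s_{i_{k}}})\equiv n-i_{k}+1\pmod 2$, and $e+o=|\Ex(W)|=2$, we get $(e,o)=(2,0)$ if both parities vanish, $(e,o)=(0,2)$ if both are odd, and $(e,o)=(1,1)$ precisely when $i_{1}\not\equiv n-i_{k}+1$, i.e.\ when $i_{1}+i_{k}\equiv n\pmod 2$. Substituting into Proposition~\ref{prop:vertexMore} with $\pty(S)\equiv n$ then yields the formula after a short case check: if $i_{1}\not\equiv i_{k}\pmod 2$ then either $(e,o)\in\{(2,0),(0,2)\}$ (when $n$ is even) or $(e,o)=(1,1)$ with $n$ odd, and in both situations Proposition~\ref{prop:vertexMore} gives nim value $0$; if $i_{1}\equiv i_{k}$ and $n$ is even then $(e,o)=(1,1)$ with $\pty(S)=0$, giving nim value $3$; if $i_{1}\equiv i_{k}\equiv 0$ and $n$ is odd then $(e,o)=(2,0)$ with $\pty(S)=1$, giving nim value $2$; and if $i_{1}\equiv i_{k}\equiv 1$ and $n$ is odd then $(e,o)=(0,2)$ with $\pty(S)=1$, giving nim value $1$. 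These are exactly the four cases in the statement.

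There is no genuine obstacle here: all the structural content already lives in Proposition~\ref{prop:vertexMore}, and the only remaining task is the bookkeeping that translates the parities of $i_{1}$, $i_{k}$, and $n$ into the signature $(e,o)$. I would present this translation compactly, probably via a small table recording the correspondence $(i_{1}\bmod 2,\, i_{k}\bmod 2,\, n\bmod 2)\mapsto(e,o)$, rather than spelling out every combination in prose.
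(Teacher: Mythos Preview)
Your proof is correct and follows essentially the same approach as the paper: reduce to the vertex geometry of the path $P_n$, identify $\Ex(W)=\{s_{i_1},s_{i_k}\}$ with $|V_{s_{i_1}}|=i_1$ and $|V_{s_{i_k}}|=n-i_k+1$, compute the signature $(e,o)$ from the parities of these two numbers, and then invoke Proposition~\ref{prop:vertexMore}. The paper presents the same computation slightly more tersely, simply listing the six possible signature cases before applying Proposition~\ref{prop:vertexMore}, whereas you give a bit more justification for $\Ex(W)$ and spell out the final case check; the content is the same.
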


\begin{proof}
We consider the equivalent game on the vertex geometry of the path
graph $P_{n}$ with vertex set $\{1,2,\ldots,n\}$ and $W=\{i_{1},i_{2},\ldots,i_{k}\}$.
The set of extreme points of the winning set is $\Ex(W)=\{i_{1},i_{k}\}$.
We have $|V_{i_{1}}|=i_{1}$ and $|V_{i_{k}}|=n-i_{k}+1$. So the
signature of this game is
\[
\sigma(X_{\Phi})=\begin{cases}
(1,1), & i_{1}\equiv_{2}i_{k}+1\text{ and }n\equiv_{2}1\\
(2,0), & i_{1}\equiv_{2}0\equiv_{2}i_{k}+1\text{ and }n\equiv_{2}0\\
(0,2), & i_{1}\equiv_{2}1\equiv_{2}i_{k}+1\text{ and }n\equiv_{2}0\\
(0,2), & i_{1}\equiv_{2}1\equiv_{2}i_{k}\text{ and }n\equiv_{2}1\\
(2,0), & i_{1}\equiv_{2}0\equiv_{2}i_{k}\text{ and }n\equiv_{2}1\\
(1,1), & i_{1}\equiv_{2}i_{k}\text{ and }n\equiv_{2}0.
\end{cases}
\]
The result now follows from Proposition~\ref{prop:vertexMore}.
\end{proof}
\begin{prop}
Let $S=\{s_{1},\ldots,s_{n}\}\subseteq\mathbb{R}$ such that $s_{i}<s_{i+1}$
for all $i$ and $W=\{s_{k}\}$. Then
\[
\nim(\GEN(S,W))=\begin{cases}
1, & n\equiv_{2}1\equiv_{2}k\\
2, & n\equiv_{2}1\equiv_{2}k+1\\
2, & n\equiv_{2}0\text{ and }k\in\{1,n\}\\
3, & n\equiv_{2}0\text{ and }k\in\{2,\ldots,n-1\}.
\end{cases}
\]
\end{prop}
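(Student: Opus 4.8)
The plan is to reduce to the vertex geometry of a path and then apply Proposition~\ref{prop:vertexOne}. As observed just before the statement, if $S=\{s_{1},\dots,s_{n}\}\subseteq\mathbb{R}$ with $s_{i}<s_{i+1}$ for all $i$, then the affine convex geometry on $S$ is isomorphic to the vertex geometry of the path $P_{n}$ on vertices $\{1,\dots,n\}$ via $s_{i}\mapsto i$, and under this isomorphism $W=\{s_{k}\}$ corresponds to $\{k\}$. So it suffices to determine the signature $(e,o)$ of $\GEN(\{1,\dots,n\},\{k\})$ on the path vertex geometry and invoke Proposition~\ref{prop:vertexOne}.

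First I would describe the neighbor set and the maximally non-generating sets. In $P_{n}$ we have $N(k)=\{2\}$ if $k=1$, $N(k)=\{n-1\}$ if $k=n$, and $N(k)=\{k-1,k+1\}$ if $k\in\{2,\dots,n-1\}$. For each $v\in N(k)$, the set $M_{v}$ is the vertex set of the component of $P_{n}\setminus k$ containing $v$. Thus when $k\in\{1,n\}$ there is exactly one maximally non-generating set, of size $n-1$, while when $k\in\{2,\dots,n-1\}$ there are exactly two, namely $\{1,\dots,k-1\}$ of size $k-1$ and $\{k+1,\dots,n\}$ of size $n-k$. (These observations are already recorded in the discussion preceding Proposition~\ref{prop:vertexOne}.)

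Next I would compute the signature. For $k\in\{1,n\}$ the unique set $M_{v}$ has parity $\pty(n-1)$, so $(e,o)=(1,0)$ if $n$ is odd and $(e,o)=(0,1)$ if $n$ is even. For $k\in\{2,\dots,n-1\}$, note that $k-1$ is even iff $k$ is odd, and $n-k$ is even iff $n\equiv_{2}k$; running through the four parity combinations of $n$ and $k$ gives $(e,o)=(1,1)$ whenever $n$ is even, $(e,o)=(2,0)$ when $n\equiv_{2}1\equiv_{2}k$, and $(e,o)=(0,2)$ when $n\equiv_{2}1\equiv_{2}k+1$. Feeding these into Proposition~\ref{prop:vertexOne} --- which gives nim number $1$ when $o=0$, nim number $2$ when $e=0$ and $o\ge1$, and nim number $3$ when $e,o\ge1$ --- produces exactly the four cases of the statement: nim number $1$ for $n\equiv_{2}1\equiv_{2}k$; nim number $2$ for $n\equiv_{2}1\equiv_{2}k+1$ and also for $n$ even with $k\in\{1,n\}$; and nim number $3$ for $n$ even with $k\in\{2,\dots,n-1\}$.

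The argument has no real obstacle beyond bookkeeping. The one point that needs care is keeping the endpoint case $k\in\{1,n\}$, where there is a single neighbor so $e+o=1$, separate from the interior case $k\in\{2,\dots,n-1\}$, where there are two neighbors so $e+o=2$; and correctly translating the parities of the component sizes $k-1$ and $n-k$ into the signature $(e,o)$.
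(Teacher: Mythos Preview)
Your argument is correct and follows the same route as the paper's proof: reduce to the vertex geometry of $P_{n}$, compute the signature $(e,o)$ case by case, and apply Proposition~\ref{prop:vertexOne}. The only cosmetic difference is that the paper writes $N(k)=\{k-1,k+1\}\cap V$ and explicitly records the degenerate case $n=k=1$ with signature $(0,0)$, whereas your description of $N(k)$ tacitly assumes $n\ge 2$; since $(0,0)$ still has $o=0$, this does not affect the conclusion.
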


\begin{proof}
We consider the equivalent game on the vertex geometry of the path
graph $P_{n}$ with vertex set $V=\{1,2,\ldots,n\}$ and $W=\{k\}$.
The neighborhood of $k$ is $N(k)=\{k-1,k+1\}\cap V$. So the signature
of this game is
\[
\sigma(X_{\Phi})=\begin{cases}
(0,0), & n=1=k\\
(1,0), & n\equiv_{2}1\equiv_{2}k\text{ and }k\in\{1,n\}\\
(2,0), & n\equiv_{2}1\equiv_{2}k\text{ and }k\in\{2,\ldots,n-1\}\\
(0,2), & n\equiv_{2}1\equiv_{2}k+1\\
(0,1), & n\equiv_{2}0\text{ and }k\in\{1,n\}\\
(1,1), & n\equiv_{2}0\text{ and }k\in\{2,\ldots,n-1\}.
\end{cases}
\]
The result now follows from Proposition~\ref{prop:vertexOne}.
\end{proof}
Deleted affine convex geometries can be solved using the techniques
of this section.
\begin{example}
Consider the deleted affine convex geometry on the set $S=\{0,1,\ldots,6\}$
with deleted set $D=\{3,5\}$. The achievement game with winning set
$W=\{2,6\}$ is equivalent to $\GEN(S\setminus D,W)$, so it has nim
number $1$. The achievement game with winning set $W=\{1,2\}$ is
equivalent to $\GEN(\{1,2,4,6\},\{1\})$, so it has nim number $2$.
\end{example}

\section{\label{sec:FurtherQuestions}Further directions}

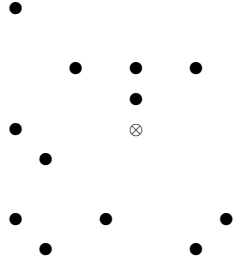
\begin{figure}
\begin{tikzpicture}[scale=.4]
\node at (-4,-3) {$\bullet$};
\node at (-4,0) {$\bullet$};
\node at (-4,4) {$\bullet$};

\node at (-3,-4) {$\bullet$};
\node at (-3,-1) {$\bullet$};
\node at (-2,2) {$\bullet$};

\node at (-1,-3) {$\bullet$};
\node at (0,0) {$\scriptscriptstyle\otimes$};
\node at (0,1) {$\bullet$};

\node at (0,2) {$\bullet$};
\node at (2,-4) {$\bullet$};
\node at (2,2) {$\bullet$};
\node at (3,-3) {$\bullet$};
\end{tikzpicture}

\caption{\label{fig:nim6}Affine convex geometry with $\protect\nim(\protect\GEN(S,\{w\}))=6$.
The winning point $w$ is indicated by $\otimes$.}

\end{figure}

We provide some comments and propose some questions for further study.
\begin{enumerate}
\item The conjecture in Remark~\ref{rem:repConj} probably can be proved
by adjusting the approach of \cite{affineRep}. 
\item Our results may suggest that the nim number of $\GEN(S,W)$ is in
the set $\{0,1,2,3\}$. The point set in Figure~\ref{fig:nim6} provides
an example in $\mathbb{R}^{2}$ where the nim number is $6$. The
companion web site \cite{WEB} provides an example in $\mathbb{R}^{3}$
where the nim number is $8$. What are the possible nim numbers for
convex geometries? What are the possible nim numbers for affine convex
geometries? Does the answer depend on the dimension of the space?
\item The definition of vertex geometry can be generalized to forest graphs.
Our results might generalize to this setting. 
\item Consider a tree graph $T$ with edge set $S$. The edge sets of connected
subgraphs of $T$ form a convex geometry on $S$. What is the nim
number of $\GEN(S,W)$ played on these \emph{edge geometries}?
\item There are several ways to build a convex geometry from a partially
ordered set. What can we say about $\GEN(S,W)$ played on these convex
geometries?
\item What can we say about $\GEN(S,W)$, where $W$ is the Frattini set
of $\GEN(S)$?
\item The original games introduced by \cite{anderson.harary:achievement}
and played on a group can be generalized by allowing a winning set
that is a subset of the group. This may produce interesting results
for special subgroups as the winning set.
\item It might be easier to determine the possible nim values of the avoidance
game $\DNG(S,W)$ played on convex geometries. The avoidance version
was also introduced by \cite{anderson.harary:achievement}. In this
game it is not allowed to select an element that creates a set whose
convex closure contains the winning set. In the group version of the
game, Lagrange's Theorem is a powerful restriction on the possible
structure diagrams \cite{BESspectrumArxive}. Without this restriction,
the spectrum of nim numbers in the convex geometry version could be
all nonnegative integers.
\item Does the Tutte polynomial of the anti-matroid associated with a convex
geometry have any information about $\GEN(S,W)$? Some connection
is expected since both deletion and contraction on a convex geometry
have meaningful game theoretic interpretations.
\end{enumerate}

\section*{}

\bibliographystyle{plain}
\bibliography{game}

\end{document}